
\documentclass[12pt]{amsart}%
\usepackage{amsmath}
\usepackage{amsfonts}
\usepackage{amssymb}
\usepackage{graphicx}
\usepackage{comment}
\usepackage[margin=1.5 in]{geometry}
\usepackage{setspace}%
\setcounter{MaxMatrixCols}{30}
\providecommand{\U}[1]{\protect\rule{.1in}{.1in}}
\providecommand{\U}[1]{\protect\rule{.1in}{.1in}}
\providecommand{\U}[1]{\protect\rule{.1in}{.1in}}
\newtheorem{theorem}{Theorem}

\newtheorem{corollary}[theorem]{Corollary}

\newtheorem{lemma}[theorem]{Lemma}

\addtolength{\footskip}{.3in} \addtolength{\oddsidemargin}{-.5in}
\addtolength{\evensidemargin}{-.5in} \addtolength{\textwidth}{1in}
\allowdisplaybreaks[1]

\theoremstyle{definition}
\numberwithin{equation}{section}

\newcommand{\resumename}{R\'esum\'e}

\begin{document}
\date{\today}
\title[Haar Property]{Dihedral Group Frames with the Haar Property}
\author[V. Oussa]{Vignon Oussa, Brian Sheehan}
\address{Dept. of Mathematics\\
Bridgewater State University\\
Bridgewater, MA 02324 U.S.A.\\
 }
\email{bsheehan@student.bridgew.edu}
\keywords{Linearly independent frames, Haar property}
\subjclass[2000]{15A15,42C15}
\maketitle

\begin{abstract}
We consider a unitary representation of the Dihedral group $D_{2n}%
=\mathbb{Z}_{n}\rtimes\mathbb{Z}_{2}$ obtained by inducing the trivial
character from the co-normal subgroup $\left\{0\right\}\rtimes\mathbb{Z}_{2}.$ This
representation is naturally realized as acting on the vector space
$\mathbb{C}^{n}.$ We prove that the orbit of almost every vector in
$\mathbb{C}^{n}$ with respect to the Lebesgue measure has the Haar property
(every subset of cardinality $n$ of the orbit is a basis for $\mathbb{C}^{n}$)
if $n$ is an odd integer. Moreover, we provide explicit sufficient conditions
for vectors in $\mathbb{C}^{n}$ whose orbits have the Haar property. Finally,
we derive that the orbit of almost every vector in $\mathbb{C}^{n}$ under the
action of the representation has the Haar property if and only if $n$ is odd.
This completely settles a problem which was only partially answered in
\cite{Oussa}.

\end{abstract}

\section{Introduction}

Let $\mathfrak{F}$ be a set of $m\geq n$ vectors in an $n$-dimensional vector
space $\mathbb{C}^{n}$ over $\mathbb{C}.$ We say that $\mathfrak{F}$ has the
\textbf{Haar property }or is\textbf{ a full spark frame} if any subset of
$\mathfrak{F}$ of cardinality $n$ is a basis for $\mathbb{C}^{n}$. We recall
that a set $\mathfrak{F}$ of vectors contained in $\mathbb{C}^{n}$ is called a
\textbf{frame} for $\mathbb{C}^{n}$ \cite{Casazza} if $\mathfrak{F}$ is a
spanning set for the vector space $\mathbb{C}^{n}.$ Moreover, it is well-known
that a countable family of vectors $\mathfrak{F}=\left(  v_{k}\right)  _{k\in
I}$ is a frame for $\mathbb{C}^{n}$ if and only if there exist positive
constants $A,B>0$ such that
\[
A\left\Vert v\right\Vert _{\mathbb{C}^{n}}^{2}\leq\sum_{k\in I}\left\langle
v,v_{k}\right\rangle _{\mathbb{C}^{n}}\leq B\left\Vert v\right\Vert
_{\mathbb{C}^{n}}^{2}%
\]
for every vector $v\in\mathbb{C}^{n}.$ Let
\[
T:\mathbb{C}^{\mathrm{card}\left(  I\right)  }\rightarrow\mathbb{C}%
^{n},T\left(  c\right)  =\sum_{k\in I}c_{k}v_{k}%
\]
be the synthesis operator corresponding to $\mathfrak{F}$. The adjoint of $T$
also known as the analysis operator is defined as follows
\[
T^{\ast}:\mathbb{C}^{n}\rightarrow\mathbb{C}^{\mathrm{card}\left(  I\right)
},\text{ }T^{\ast}v=\left(  \left\langle v,v_{k}\right\rangle _{\mathbb{C}%
^{n}}\right)  _{k\in I}.
\]
The composition of the synthesis operator with its adjoint%
\[
S:\mathbb{C}^{n}\rightarrow\mathbb{C}^{n},\text{ }Sv=\sum_{k\in I}\left\langle
v,v_{k}\right\rangle _{\mathbb{C}^{n}}v_{k}%
\]
is called the frame operator. It is a well-known fact \cite{Casazza} that if
$\mathfrak{F}=\left(  v_{k}\right)  _{k\in I}$ is a frame then $S$ is a
self-adjoint invertible operator and every vector $v\in\mathbb{C}^{n}$ can be
represented as
\[
v=\sum_{k\in I}\left\langle v,v_{k}\right\rangle _{\mathbb{C}^{n}}S^{-1}%
v_{k}.
\]
In other words, any vector $v$ can be reconstructed from the sequence $\left(
\left\langle v,v_{k}\right\rangle _{\mathbb{C}^{n}}\right)  _{k\in I}.$ In
fact if $K$ is a proper subset of $I$ such that $\left(  v_{k}\right)  _{k\in
K}$ is also a frame then any vector $v$ can be reconstructed from the sequence
$\left(  \left\langle v,v_{k}\right\rangle _{\mathbb{C}^{n}}\right)  _{k\in
K}$. Thus, full spark frames are very flexible basis-like tools in the sense
that if $\left(  v_{k}\right)  _{k\in I}$ is a full spark frame, then for any
subset $J$ of $I$ of cardinality $n,$ every vector $v\in\mathbb{C}^{n}$ can be
uniquely represented as
\[
v=\sum_{k\in J}\left\langle v,v_{k}\right\rangle _{\mathbb{C}^{n}}S_{J}%
^{-1}v_{k}\text{ with }S_{J}v=\sum_{k\in J}\left\langle v,v_{k}\right\rangle
_{\mathbb{C}^{n}}v_{k}.
\]
We shall now present some examples of full spark frames induced from the
action of group representations. Let $\mathbb{H}_{n}<GL\left(
n,\mathbb{\mathbb{C}}\right)  $ be the Heisenberg group which is a finite
group generated by
\[
T=\left[
\begin{array}
[c]{ccccc}%
0 & 0 & \cdots & 0 & 1\\
1 & 0 & \cdots & 0 & 0\\
0 & 1 & \ddots & \vdots & \vdots\\
\vdots & \ddots & \ddots & 0 & 0\\
0 & \cdots & 0 & 1 & 0
\end{array}
\right]  \text{ and }M=\left[
\begin{array}
[c]{ccccc}%
1 &  &  &  & \\
& \exp\left(  \frac{2\pi i}{n}\right)  &  &  & \\
&  & \exp\left(  \frac{4\pi i}{n}\right)  &  & \\
&  &  & \ddots & \\
&  &  &  & \exp\left(  \frac{2\pi i\left(  n-1\right)  }{n}\right)
\end{array}
\right]  .
\]
It is shown in \cite{Romanos, Pfander} that for almost every vector
$v\in\mathbb{C}^{n}$ (with respect to Lebesgue measure) the collection
\[
\left\{  T^{\ell}M^{\kappa}v:\left(  \ell,\kappa\right)  \in\mathbb{Z}%
_{n}\times\mathbb{Z}_{n}\right\}
\]
is a full spark frame. Next, let $n$ be a natural number greater than two. Let
$D_{2n}$ be the Dihedral group generated by $T,D\in GL\left(
n,\mathbb{\mathbb{C}}\right)  $ where
\begin{equation}
T=\left[
\begin{array}
[c]{ccccc}%
0 & 0 & \cdots & 0 & 1\\
1 & 0 & \cdots & 0 & 0\\
0 & 1 & \ddots & \vdots & \vdots\\
\vdots & \ddots & \ddots & 0 & 0\\
0 & \cdots & 0 & 1 & 0
\end{array}
\right]  ,D=\left[
\begin{array}
[c]{ccccc}%
1 & 0 & \cdots & 0 & 0\\
0 & 0 & \cdots & 0 & 1\\
\vdots & \vdots &  &  & 0\\
0 & 0 & 1 &  & \vdots\\
0 & 1 & 0 & \cdots & 0
\end{array}
\right]  .
\end{equation}
It is proved in \cite{Oussa} that if $n$ is even then it is not possible to
find a full spark frame of the type $D_{2n}v.$ Moreover, if $n$ is prime then
there exists a Zariski open subset $\Omega$ of $\mathbb{C}^{n}$ such that
$D_{2n}v$ is a full spark frame for every vector $v$ in $\Omega.$ While this
result establishes the existence of vectors $v$ such that $D_{2n}v$ has the
Haar property when $n$ is prime, the work in \cite{Oussa} is not constructive.
Moreover, the case where $n$ is odd but not prime was left open in
\cite{Oussa}. The main objective of the present paper is to fill this gap by
settling the following. Let $n$ be an odd number which is not necessarily odd.

\begin{itemize}
\item Is there a vector $v\in\mathbb{\mathbb{C}}^{n}$ such that $D_{2n}v$ has
the Haar property?

\item Can we provide an explicit construction of a vector $v\in
\mathbb{\mathbb{C}}^{n}$ such that $D_{2n}v$ has the Haar property?
\end{itemize}

\subsection{Main results}

Here is a summary of our results. For a fixed natural number $n>2$, we define
$\digamma_{n}\subset\mathbb{C}^{n}$ such that
\begin{equation}
\digamma_{n}=\left\{
\begin{array}
[c]{c}%
w\in\mathbb{C}^{n}:w=\left[
\begin{array}
[c]{c}%
\sum_{k=0}^{n-1}\lambda^{k}\\
\sum_{k=0}^{n-1}\lambda^{k}e^{-\frac{2\pi ki}{n}}\\
\vdots\\
\sum_{k=0}^{n-1}\lambda^{k}e^{-\frac{2\pi k\left(  n-1\right)  i}{n}}%
\end{array}
\right] \\
\text{where }\lambda\text{ is a either transcendal}\\
\text{or an algebraic number over }\mathbb{\mathbb{%
\mathbb{Q}
}}\left(  \frac{2\pi i}{n}\right)  \text{ whose degree is at least }n^{2}-n+1
\end{array}
\right\}  . \label{Fn}%
\end{equation}

\begin{theorem}
\label{main} If $n$ is odd and $w\in\digamma_{n}$ then $D_{2n}w$ is a full
spark frame.
\end{theorem}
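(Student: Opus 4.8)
The plan is to reduce the full-spark condition to the non-vanishing of finitely many determinants, and then to exploit the special form of the vectors in $\digamma_{n}$ to show these determinants are nonzero polynomials (or algebraic numbers) in $\lambda$. First I would write down the group $D_{2n}$ explicitly: its $2n$ elements act as $T^{j}$ and $T^{j}D$ for $0 \le j \le n-1$, where $T$ is the cyclic shift and $D$ is the flip. Since $T$ is diagonalized by the discrete Fourier transform $\mathcal{F}$, conjugating the whole representation by $\mathcal{F}$ turns $T$ into the diagonal matrix $\mathrm{diag}(1, \zeta, \dots, \zeta^{n-1})$ with $\zeta = e^{-2\pi i/n}$, and turns $D$ into another permutation-type matrix (essentially $m \mapsto -m \bmod n$). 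The point of the definition of $\digamma_{n}$ is precisely that $w = \mathcal{F}^{-1}(\text{geometric-type vector})$, so in the Fourier picture the generating vector becomes $\widehat{w} = (1, \lambda, \lambda^{2}, \dots, \lambda^{n-1})^{T}$, which is a Vandermonde-style vector. So the orbit $D_{2n}w$ is unitarily equivalent to the orbit of $\widehat{w}$ under the diagonal/flip representation, and full spark is preserved under this unitary change of coordinates.

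Next I would set up the determinant condition. Full spark means: for every choice of $n$ distinct group elements $g_{1}, \dots, g_{n}$, the matrix with columns $g_{1}\widehat w, \dots, g_{n}\widehat w$ is invertible. Each such column is, up to a scalar of modulus one coming from the diagonal action, either a reparametrization $(\zeta^{a_{i} m}\lambda^{m})_{m}$ or (for the flip elements) $(\zeta^{a_i m}\lambda^{-m})_{m}$ read off against the standard basis — concretely each column has entries that are monomials in $\lambda$ with coefficients that are roots of unity. I would organize the $n$ chosen group elements into $p$ "rotation" type and $q = n-p$ "flip" type with $p + q = n$, and show the resulting determinant, after clearing the common powers of $\lambda$ from the flip columns, is a Laurent polynomial in $\lambda$ with coefficients in $\mathbb{Q}(\zeta)$. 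The key structural claim is that this Laurent polynomial is \emph{not identically zero}: its "span" (difference between top and bottom degree) is bounded by something like $n^{2}-n$, which is exactly why $\digamma_{n}$ requires $\lambda$ transcendental or algebraic of degree $\ge n^{2}-n+1$ — such a $\lambda$ cannot be a root of any nonzero polynomial of degree $\le n^{2}-n$ over $\mathbb{Q}(\zeta)$, hence all these determinants are automatically nonzero.

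The main obstacle — and the heart of the argument — is proving that each of these determinant polynomials is nonzero as a formal object, and here is where the hypothesis that $n$ is \emph{odd} must enter (the excerpt already tells us it fails for $n$ even). I would attack this by identifying the lowest-order (or highest-order) coefficient in $\lambda$ and showing it is a nonzero element of $\mathbb{Q}(\zeta)$: after pulling out the appropriate power of $\lambda$, the extreme coefficient reduces to a determinant of a matrix whose entries are pure roots of unity — essentially a submatrix of a DFT-type matrix or a product of such with a permutation. One then shows this root-of-unity determinant is nonzero; for $n$ odd one can use that $2$ is invertible mod $n$ so the maps $m \mapsto \pm m + a$ interact well, whereas for $n$ even the flip has fixed points causing degeneracies. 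I would make this precise by a careful bookkeeping of which monomials $\lambda^{e}$ can appear in the determinant expansion and checking the extreme exponent is attained by a unique permutation (or by a set of permutations whose root-of-unity coefficients provably do not cancel), invoking a Vandermonde or character-sum nonvanishing lemma. Once the formal nonvanishing and the degree bound $n^{2}-n$ are in hand, the definition of $\digamma_{n}$ closes the argument immediately, and the "almost every vector" statement follows because the bad set is the zero locus of a nonzero polynomial and $\digamma_{n}$ is dense/full-measure-accessible via transcendental $\lambda$.
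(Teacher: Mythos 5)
Your overall reduction is the same as the paper's: conjugate by the discrete Fourier transform so that the generating vector becomes $(1,\lambda,\dots,\lambda^{n-1})^{T}$, observe that every $n\times n$ minor of the $2n\times n$ orbit matrix is a polynomial in $\lambda$ with coefficients in $\mathbb{Q}(e^{2\pi i/n})$ of degree at most $n(n-1)=n^{2}-n$, and then let the transcendence/degree hypothesis built into $\digamma_{n}$ finish the argument, transporting back by $\mathbf{F}^{-1}$. The gap is in the step you yourself identify as the heart: showing each such determinant is not the zero polynomial. Your plan is to extract the extreme (top or bottom) coefficient in $\lambda$ and assert it is ``essentially a submatrix of a DFT-type matrix'' whose nonvanishing ``one then shows.'' For composite $n$ this is exactly what fails in general: minors of the Fourier matrix can vanish, and the nonvanishing of all minors (Chebotarev; the Evans--Isaacs lemma quoted in the paper) is a prime-only result --- which is precisely why the composite odd case was left open in \cite{Oussa}. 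Moreover, your stated mechanism for where oddness enters (invertibility of $2$ modulo $n$, fixed points of the flip) is not the actual obstruction; what oddness must rule out is a \emph{tie in the $\lambda$-exponents} between competing terms in the determinant expansion.

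For comparison, the paper's route is: split the $n$ chosen orbit elements into $m$ rotation-type and $n-m$ flip-type rows, Laplace-expand along the rotation block, and note that each Laplace term is a monomial $q_{s}\,t^{\delta_{s}}$ indexed by a column subset $s$ of size $m$. It then picks the specific subset $s=\{0,1,\dots,m-1\}$, whose coefficient is an explicit product of differences of distinct $n$-th roots of unity (hence nonzero --- no Chebotarev-type input needed), and proves that for $n$ odd no other subset yields the same exponent, because the exponent discrepancy has the form $2\sum_{k}\ell_{k}$ or $2\sum_{k}\ell_{k}-n$, neither of which can vanish when $n$ is odd; thus the coefficient of $t^{\kappa_{(n,m)}}$ in the full determinant is nonzero. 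Your extreme-coefficient variant could in fact be repaired in the same spirit: the two candidate minimizing column subsets differ in exponent by $2m-n\neq 0$ for $n$ odd, so the minimizer is unique, and it consists of consecutive (mod $n$) column blocks, so the extreme coefficient is a genuine Vandermonde-type product and nonzero. But neither the uniqueness-of-exponent argument nor the consecutiveness observation appears in your proposal, and without them the central nonvanishing claim is unsupported.
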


Suppose now that $G$ is a finite group of matrices of order $m$ acting on
$\mathbb{C}^{n}.$ Moreover, let us assume that the order of $G$ is greater
than or equal to $n.$ It is not hard to verify that if there exists one vector
$w$ in $\mathbb{C}^{n}$ such that the orbit of $w$ under the action of $G$ has
the Haar property then for almost every vector $v$ (with respect to Lebesgue
measure) in $\mathbb{C}^{n},$ the orbit of $v$ under the action of $G$ has the
Haar property. Indeed, assume that there exists a vector $w$ in $\mathbb{C}%
^{n}$ such that $Gw$ has the Haar property. Consider the matrix-valued
function
\begin{equation}
v\mapsto O\left(  v\right)  =\left[
\begin{array}
[c]{ccc}%
\left(  g_{1}v\right)  _{0} & \cdots & \left(  g_{1}v\right)  _{n-1}\\
\vdots & \ddots & \vdots\\
\left(  g_{m}v\right)  _{0} & \cdots & \left(  g_{m}v\right)  _{n-1}%
\end{array}
\right]  .
\end{equation}
By assumption, there exists an element $O\left(  w\right)  $ in the range of
the function above such that any sub-matrix of order $n$ of $O\left(
w\right)  $ has a non-zero determinant. Let $P_{n}$ be the set of all
sub-matrices of $O\left(  v\right)  $ of order $n.$ Next, let $p\left(
v\right)  $ be the least common multiple of all polynomials obtained by
computing the determinant of $X\left(  v\right)  $ where $X\left(  v\right)
\in P_{n}.$ That is,
\[
p\left(  v\right)  =\operatorname{lcm}\left(  \left\{  \det\left(  X\left(
v\right)  \right)  :X\left(  v\right)  \in P_{n}\right\}  \right)  .
\]
Clearly $w$ is not a zero for the polynomial $p.$ Next, let $W$ be an open set
around $p\left(  w\right)  \neq0$ such that $W$ does not contain zero. Since
$p$ is continuous, the inverse image of the open set $W$ under the map $p$ is
an open subset of $\mathbb{C}^{n}.$ So, there exists an open subset of
$\mathbb{C}^{n}$ containing $w$ which is disjoint from the zero set of $p.$
This implies that $p$ is not the zero polynomial. Since the zero set of $p$ is
a co-null set with respect to the Lebesgue measure on $\mathbb{C}^{n}$, it
immediately follows that for almost every vector $v$ in $\mathbb{C}^{n},$ the
set $Gv$ has the Haar property. From the preceding discussion, the following
is immediate.

\begin{corollary}
\label{cor} Let $n$ be an odd natural number greater than two. There exists a
Zariski open subset $\Omega\subset\mathbb{C}^{n}$ such that if $v\in\Omega$
then $D_{2n}v$ is a full spark frame.
\end{corollary}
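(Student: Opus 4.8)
The plan is to combine Theorem~\ref{main} with the elementary observation recorded in the paragraph preceding the corollary: once a single vector with a Haar-property orbit is known to exist, a whole Zariski open set of such vectors must exist. So the first step is merely to check that $\digamma_{n}$ is nonempty, which is clear because $\mathbb{Q}(\tfrac{2\pi i}{n})$ is countable and hence $\mathbb{C}$ contains (in fact uncountably many) elements transcendental over it; fixing any such $\lambda$ produces a vector $w\in\digamma_{n}$, and Theorem~\ref{main} — using the hypothesis that $n$ is odd — guarantees that $D_{2n}w$ is a full spark frame.

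Next I would set up the orbit map $v\mapsto O(v)$ for the group $G=D_{2n}$, a finite matrix group of order $m=2n\geq n$ acting on $\mathbb{C}^{n}$, exactly as in the discussion before the corollary. The entries of $O(v)$ are linear forms in the coordinates of $v$, so every $n\times n$ minor of $O(v)$ is a homogeneous polynomial of degree $n$ in those coordinates. I would then let $p(v)$ be the least common multiple (equivalently, for present purposes, the product) of these finitely many minor polynomials. The crucial structural fact is that $D_{2n}v$ has the Haar property precisely when all of these minors are nonzero, that is, precisely when $p(v)\neq 0$.

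The decisive step is to see that $p$ is not the zero polynomial. Since $D_{2n}w$ is a full spark frame, every $n\times n$ minor of $O(w)$ is nonzero, whence $p(w)\neq 0$. Consequently $\Omega := \{v\in\mathbb{C}^{n} : p(v)\neq 0\}$ is a nonempty Zariski open subset of $\mathbb{C}^{n}$, and by construction $D_{2n}v$ is a full spark frame for every $v\in\Omega$. Moreover, because the zero set of a nonzero polynomial on $\mathbb{C}^{n}$ has Lebesgue measure zero, $\Omega$ is co-null, which recovers the ``almost every vector'' formulation in the abstract.

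I do not expect any genuine obstacle here beyond Theorem~\ref{main} itself: the corollary is a soft consequence of the fact that the Haar property is a Zariski-open condition on $v$, together with the nonemptiness of $\digamma_{n}$. The only point requiring mild care is the role of the parity hypothesis — it enters only through Theorem~\ref{main}, and indeed when $n$ is even the polynomial $p$ vanishes identically (so $\Omega$ would be empty), in agreement with the impossibility result of \cite{Oussa}.
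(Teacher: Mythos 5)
Your proposal is correct and follows essentially the same route as the paper: apply Theorem~\ref{main} to one vector $w\in\digamma_{n}$ (the paper leaves the nonemptiness of $\digamma_{n}$ implicit, which you rightly verify), then use the observation preceding the corollary that the Haar property is the nonvanishing locus of the polynomial $p$ built from the $n\times n$ minors of the orbit matrix, so $\Omega=\{p\neq 0\}$ is a nonempty Zariski open set. No substantive differences to report.
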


\noindent Finally, a direct application of the results in \cite{Oussa} and
Theorem \ref{main} together with Corollary \ref{cor} gives the following.

\begin{corollary}
The $D_{2n}$-orbit of almost every vector (with respect to the Lebesgue
measure) in $\mathbb{C}^{n}$ is a full spark frame if and only if $n$ is odd.
\end{corollary}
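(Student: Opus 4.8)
The plan is to deduce this equivalence from the three ingredients already established: Theorem~\ref{main}, Corollary~\ref{cor}, and the even-order obstruction of \cite{Oussa}. I would split the biconditional into its two implications.

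For ``$n$ odd $\Rightarrow$ almost every $D_{2n}$-orbit is full spark'', the first step is to note that $\digamma_n$ is nonempty. Indeed, the field $\mathbb{Q}(2\pi i/n)$ appearing in \eqref{Fn} is countable, so the set of $\lambda\in\mathbb{C}$ transcendental over it is co-countable, hence nonempty; any such $\lambda$ yields a bona fide vector $w\in\digamma_n$ through the formula \eqref{Fn}. Fixing such a $w$, Theorem~\ref{main} gives that $D_{2n}w$ is a full spark frame. At this point the orbit-map argument carried out just before Corollary~\ref{cor} applies verbatim with $G=D_{2n}$ (and $|D_{2n}|=2n\ge n$, so its hypothesis is satisfied): the polynomial $p(v)=\operatorname{lcm}\bigl(\{\det(X(v)):X(v)\in P_n\}\bigr)$ does not vanish at $w$, hence is not identically zero, so its zero set is Lebesgue-null; since $D_{2n}v$ is full spark precisely when $p(v)\neq0$, we conclude that $D_{2n}v$ is a full spark frame for almost every $v\in\mathbb{C}^n$. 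This is exactly the statement of Corollary~\ref{cor}.

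For the reverse implication I would argue by contraposition. If $n$ is even, then by \cite{Oussa} there is \emph{no} vector $v\in\mathbb{C}^n$ with $D_{2n}v$ a full spark frame; a fortiori the set of such $v$ is empty, hence not conull, so it is false that almost every $D_{2n}$-orbit is full spark. Therefore, if almost every $D_{2n}$-orbit is full spark then $n$ cannot be even, i.e.\ $n$ is odd. Combining the two implications completes the proof.

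I do not anticipate a genuine obstacle here: all the analytic and number-theoretic difficulty has been absorbed into Theorem~\ref{main}, and what is left is bookkeeping. The only points deserving an explicit word are the nonemptiness of $\digamma_n$ and the (trivial) order bound $|D_{2n}|\ge n$ needed to invoke Corollary~\ref{cor}.
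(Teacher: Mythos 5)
Your proposal is correct and follows essentially the same route as the paper: the forward direction is exactly Theorem~\ref{main} combined with the generic-orbit polynomial argument recorded as Corollary~\ref{cor}, and the reverse direction is the even-$n$ obstruction from \cite{Oussa}. Your added remarks on the nonemptiness of $\digamma_{n}$ and the order bound $\left\vert D_{2n}\right\vert =2n\geq n$ are harmless elaborations of details the paper leaves implicit.
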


The paper is organized as follows. In the second section, we fix notation and
we review a variety of relevant concepts. The third section contains
intermediate results leading to the proof of the main result which is given in
the last section.

\section{Preliminaries}

Let us start by fixing notation. Given a matrix $M,$ the transpose of $M$ is
denoted $M^{T}.$ The determinant of a matrix $M$ is denoted by $\mathrm{det}%
(M)$ or $|M|.$ The $k$-th row of $M$ is denoted $\operatorname{row}_{k}\left(
M\right)  $ and similarly, the $k$-th column of the matrix $M$ is denoted
$\operatorname{col}_{k}\left(  M\right)  .$ Let $G$ be a group acting on a set
$S.$ We denote this action multiplicatively. For any fixed element $s\in S,$
the $G$-orbit of $s$ is described as $Gs=\left\{  gs:g\in G\right\}  .$

\subsection{Fourier Analysis on $\mathbb{Z}_{n}$}

Let $\mathbb{Z}_{n}=\left\{  0,1,\cdots,n-1\right\}  .$ Let $l^{2}\left(
\mathbb{Z}_{n}\right)  =\left\{  f:\mathbb{Z}_{n}\rightarrow\mathbb{C}%
\right\}  $ be the vector space of all complex-valued functions on
$\mathbb{Z}_{n}$ endowed with the dot product
\[
\left\langle \phi,\psi\right\rangle =\sum_{x\in\mathbb{Z}_{n}}\phi\left(
x\right)  \overline{\psi\left(  x\right)  }\text{ for }\phi,\psi\in
l^{2}\left(  \mathbb{Z}_{n}\right)  .
\]
The norm of a given vector $\phi$ in $l^{2}\left(  \mathbb{Z}_{n}\right)  $ is
computed as follows $\left\Vert \phi\right\Vert _{l^{2}\left(  \mathbb{Z}%
_{n}\right)  }=\left\langle \phi,\phi\right\rangle ^{1/2}.$ We recall that the
discrete Fourier transform is a map $\mathcal{F}:l^{2}\left(  \mathbb{Z}%
_{n}\right)  \rightarrow l^{2}\left(  \mathbb{Z}_{n}\right)  $ defined by
\[
\left[  \mathcal{F}\phi\right]  \left(  \xi\right)  =\frac{1}{n^{1/2}}%
\sum_{k\in\mathbb{Z}_{n}}\phi\left(  k\right)  \exp\left(  \frac{2\pi ik\xi
}{n}\right)  ,\text{ for }\phi\in l^{2}\left(  \mathbb{Z}_{n}\right)  .
\]
The following facts are also well-known \cite{Terras}.

\begin{itemize}
\item The discrete Fourier transform is a bijective linear operator.

\item The Fourier inverse of a vector $\varphi$ is given by
\[
\left[  \mathcal{F}^{-1}\varphi\right]  \left(  k\right)  =\frac{1}{n^{1/2}%
}\sum_{\xi\in\mathbb{Z}_{n}}\varphi\left(  \xi\right)  \exp\left(  -\frac{2\pi
ik\xi}{n}\right)  .
\]

\item The Fourier transform is a unitary operator. More precisely, given
$\phi,\psi\in l^{2}\left(  \mathbb{Z}_{n}\right) ,$ we have $\left\langle
\phi,\psi\right\rangle =\left\langle \mathcal{F}\phi,\mathcal{F}%
\psi\right\rangle .$
\end{itemize}

The followig is proved in \cite{Evans}.

\begin{lemma}
\label{minor cyclic}Let $\mathbf{F}$ be the matrix representation of the
Fourier transform. If $n$ is prime then every minor of $\mathbf{F}$ is nonzero.
\end{lemma}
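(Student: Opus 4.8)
This is the statement of Chebotarev's theorem on roots of unity, so the plan is to reproduce one of its proofs in our notation. Since the scalar $n^{-1/2}$ in front of every entry of $\mathbf{F}$ is nonzero, an $m\times m$ minor of $\mathbf{F}$ equals $n^{-m/2}$ times the corresponding minor of the matrix $W=\big(\zeta^{jk}\big)_{j,k=0}^{n-1}$ with $\zeta=e^{2\pi i/n}$; hence it suffices to show every minor of $W$ is nonzero. Fix rows $0\le i_1<\cdots<i_m\le n-1$ and columns $0\le j_1<\cdots<j_m\le n-1$ (so $1\le m\le n$), write $p=n$, and introduce the integer polynomial $P(x)=\det\big(x^{\,i_k j_l}\big)_{k,l=1}^{m}\in\Z[x]$, so that the minor in question is $n^{-m/2}P(\zeta)$. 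It is enough to prove $P(\zeta)\neq 0$.

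The first step is to expand $P$ about $x=1$. Factoring $x^{\,i_k j_1}$ out of the $k$-th row and using the generalized Vandermonde (bialternant) identity, with $u_k=x^{\,i_k}$,
\[
P(x)=\Big(\prod_{k=1}^{m}u_k^{\,j_1}\Big)\Big(\prod_{1\le k<k'\le m}(u_k-u_{k'})\Big)\,s_\lambda(u_1,\dots,u_m),
\]
where $s_\lambda$ is the Schur polynomial of the partition $\lambda$ with $\lambda_r=(j_{m+1-r}-j_1)-(m-r)$. Substituting $x=1+y$ gives $u_k=1+i_k y+O(y^2)$, hence $u_k-u_{k'}=(i_k-i_{k'})y+O(y^2)$, $s_\lambda(u)=s_\lambda(1,\dots,1)+O(y)$ and $\prod_k u_k^{\,j_1}=1+O(y)$, so that $P(1+y)=\big(\prod_{k<k'}(i_k-i_{k'})\big)\,s_\lambda(1,\dots,1)\,y^{\binom m2}+O\big(y^{\binom m2+1}\big)$. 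Writing $P(1+y)=\sum_{s\ge 0}a_s y^s$, we conclude $a_s\in\Z$, $a_s=0$ for $s<\binom m2$, and $a_{\binom m2}=\pm\big(\prod_{k<k'}(i_k-i_{k'})\big)\,s_\lambda(1,\dots,1)$.

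The second step is to check that $a_{\binom m2}$ is an integer coprime to $p$. The $i_k$ are distinct elements of $\{0,\dots,p-1\}$, so each $i_k-i_{k'}$ is a nonzero integer of absolute value $<p$. For the Schur factor, recall that $s_\lambda(1,\dots,1)$ counts column-strict tableaux and is given by the hook–content formula $s_\lambda(1,\dots,1)=\prod_{(i,j)\in\lambda}\frac{m+j-i}{h(i,j)}$. Since $\lambda_1=j_m-j_1-(m-1)\le p-m$, the Young diagram of $\lambda$ fits inside an $m\times(p-m)$ box, so every hook length $h(i,j)$ and every content shift $m+j-i$ lies in $\{1,\dots,p-1\}$; thus numerator and denominator are both coprime to $p$ and $s_\lambda(1,\dots,1)$ is a unit in $\Z_{(p)}$. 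Hence $a_{\binom m2}$ is a nonzero integer coprime to $p$.

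The last step is a $\pi$-adic valuation count. Let $\pi=1-\zeta$, which generates the unique prime ideal of $\Z[\zeta]$ lying over $p$, with residue field $\Z[\zeta]/(\pi)\cong\mathbb{F}_p$, and let $v$ be the associated valuation normalized by $v(\pi)=1$; then $v(a)=0$ for every integer $a$ coprime to $p$, and $v(\zeta-1)=v(-\pi)=1$. Substituting $x=\zeta$ into the expansion of $P$ about $1$,
\[
P(\zeta)=a_{\binom m2}(\zeta-1)^{\binom m2}+\sum_{s>\binom m2}a_s(\zeta-1)^s,
\]
the first term has valuation $v(a_{\binom m2})+\binom m2=\binom m2$ by the second step, while every remaining term has valuation $\ge\binom m2+1$ (each $a_s\in\Z$, so $v(a_s)\ge 0$). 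By the ultrametric inequality $v(P(\zeta))=\binom m2<\infty$, so $P(\zeta)\neq 0$, which proves the lemma. The only genuinely nontrivial point is the second step, namely that $p\nmid s_\lambda(1,\dots,1)$; this relies essentially on the bound $\deg P\le n-1=p-1$, which forces all hook lengths and contents of $\lambda$ into $\{1,\dots,p-1\}$. Everything else is formal bookkeeping, and one may double-check the extreme cases $m=1$ and $m=n$, where $P(\zeta)$ reduces to a monomial value and to a Vandermonde determinant, respectively.
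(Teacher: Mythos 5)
Your proposal is correct, but it cannot be compared line-by-line with the paper for a simple reason: the paper gives no proof of this lemma at all, it imports it as a black box from Evans--Isaacs \cite{Evans}. What you have written is essentially the short valuation-theoretic proof of Chebotarev's theorem (in the spirit of Frenkel's and Tao's expositions): reduce to the integer polynomial $P(x)=\det\bigl(x^{i_kj_l}\bigr)\in\Z[x]$, note that $P$ vanishes at $x=1$ to order at least $\binom{m}{2}$ because each factor $u_k-u_{k'}$ does, identify the coefficient of $(x-1)^{\binom{m}{2}}$ as $\pm\prod_{k<k'}(i_k-i_{k'})\cdot s_\lambda(1,\dots,1)$, check via the hook--content formula that this coefficient is prime to $p$ (the diagram of $\lambda$ fits in an $m\times(p-m)$ box since $j_m-j_1\le p-1$, so every hook length and shifted content lies in $\{1,\dots,p-1\}$, and the differences $i_k-i_{k'}$ are nonzero of absolute value less than $p$), and conclude with the $(1-\zeta)$-adic valuation: the term with $s=\binom{m}{2}$ has valuation exactly $\binom{m}{2}$, all later terms have valuation at least $\binom{m}{2}+1$, so $P(\zeta)\neq0$. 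Each of these steps is sound, including the boundary cases $m=1$ and $m=n$. By contrast, the Evans--Isaacs argument the paper cites is an elementary induction on polynomials with few terms vanishing at the points $\zeta^{i_k}$, using no valuations or symmetric functions; citing it keeps the paper short, while your route buys a self-contained proof at the price of invoking the bialternant identity and the hook--content formula.

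Two cosmetic points, neither of which harms the argument. First, your sign bookkeeping (writing $\prod_{k<k'}(u_k-u_{k'})$ and indexing the Schur polynomial from decreasing exponents) is off by factors of $(-1)^{\binom{m}{2}}$ and column-permutation signs, but you correctly absorb this into the $\pm$, and only nonvanishing modulo $p$ is needed. Second, the closing remark that the proof ``relies on $\deg P\le n-1$'' is not literally true: $\deg P$ can be as large as $(n-1)^2$. What the hook and content bounds actually use is only that the column exponents $j_1<\cdots<j_m$ are distinct integers in $\{0,\dots,p-1\}$, which gives $\lambda_1\le p-m$ --- and that is exactly how you derived it in the body of the argument, so nothing breaks; I would simply delete or rephrase that sentence.
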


Identifying $l^{2}\left(  \mathbb{Z}_{n}\right)  $ with $\mathbb{C}^{n}$ via
the map $v\mapsto\left[
\begin{array}
[c]{ccc}%
v\left(  0\right)  & \cdots & v\left(  n-1\right)
\end{array}
\right]  ^{T},$ we may write
\[
\left(  Tv\right)  \left(  j\right)  =v\left(  \left(  j-1\right)
\operatorname{mod}n\right)  \text{ and }\left(  Dv\right)  \left(  j\right)
=v\left(  \left(  n-j\right)  \operatorname{mod}n\right)  .
\]
It is not hard to verify that for any $\xi\in\mathbb{Z}_{n},$ we have $\left[
\mathcal{F}Bv\right]  \left(  \xi\right)  =\left(  \mathcal{F}v\right)
\left(  \left(  n-\xi\right)  \operatorname{mod}n\right)  $ and $\left[
\mathcal{F}Av\right]  \left(  \xi\right)  =e^{\frac{2\pi i}{n}\xi}\left(
\mathcal{F}v\right)  \left(  \xi\right) .$ Next, let $\mathbf{F}$ be the
matrix representation of the Fourier transform with respect to the canonical
basis of $\mathbb{C}^{n}.$ The matrix $T$ is diagonalizable and its Jordan
canonical form is obtained by conjugating $T$ by the discrete Fourier matrix.
More precisely if $M=\mathbf{F}T\mathbf{F}^{-1}$ then the $k^{th}$ diagonal
entry of $M$ is given by $M_{kk}=e^{\frac{2\left(  k-1\right)  \pi i}{n}}.$
Finally, it is worth noting that $D$ commutes with the Fourier matrix
$\mathbf{F.}$

\subsection{Determinants}

(\cite{Howard}, Section $3.7$) Let $X$ be a square matrix of order $n.$ A
minor of $X$ is the determinant of any square sub-matrix $Y$ of $X.$ Let
$\left\vert Y\right\vert =\det Y$ be an $m$-rowed minor of $X.$ The
determinant of the sub-matrix obtained by deleting from $X$ the rows and
columns represented in $Y$ is called the complement of $\left\vert
Y\right\vert $ and is denoted $\left\vert Y^{c}\right\vert =\det\left(
Y^{c}\right)  .$ Let $\left\vert Y\right\vert $ be the $m$-rowed minor of $X$
in which rows $i_{1},\cdots,i_{m}$ and columns $j_{1},\cdots,j_{m}$ are
represented. Then the algebraic complement, or cofactor of $\left\vert
Y\right\vert $ is given by $\left(  -1\right)  ^{\sum_{k=1}^{m}i_{k}%
+\sum_{k=1}^{m}j_{k}}\left\vert Y^{c}\right\vert .$ According to
\textbf{Laplace's Expansion Theorem} (see $3.7.3,$ \cite{Howard}) a formula
for the determinant of $X$ can be obtained as follows. Let $T\left(
n,p\right)  $ be the set of all ordered sets of cardinality $p$ of the type
$s=\left\{  0\leq s_{1}<s_{2}<\cdots<s_{p}\leq n-1\right\}  .$ Given any two
elements $s$ and $t$ in $T\left(  n,p\right)  ,$ we define $X\left(
s,t\right)  $ to be the sub-matrix of $X$ of order $p$ such that $X\left(
s,t\right)  _{i,j}=X_{s_{i}+1,t_{j}+1}.$ According to Laplace's Expansion
Theorem, once we fix an ordered set $t$ in $T\left(  n,p\right)  ,$ the
determinant of $X$ can be computed via the following expansion%
\begin{equation}
\sum_{s\in T\left(  n,p\right)  }\left(  -1\right)  ^{\sum_{k=1}^{p}\left(
s_{k}+1\right)  +\sum_{k=1}^{p}\left(  t_{k+1}\right)  }\det\left(  X\left(
t,s\right)  \right)  \det\left(  X\left(  t,s\right)  ^{c}\right)  .
\end{equation}
The following well-known result will also be useful to us.

\begin{lemma}
(Page $127,$\cite{Howard}) Let
\[
V=\left[
\begin{array}
[c]{ccccc}%
1 & a_{1} & a_{1}^{2} & \cdots & a_{1}^{n-1}\\
1 & a_{2} & a_{2}^{2} & \cdots & a_{2}^{n-1}\\
\vdots & \vdots & \vdots & \cdots & \vdots\\
1 & a_{n} & a_{n}^{2} & \cdots & a_{n}^{n-1}%
\end{array}
\right]
\]
be a \textbf{Vandermonde matrix}. Then
\begin{equation}
\det\left(  V\right)  =%
{\displaystyle\prod\limits_{1\leq j<k\leq n}}
\left(  a_{k}-a_{j}\right)  \label{Vand}%
\end{equation}

\end{lemma}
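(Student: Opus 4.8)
The final statement is the classical Vandermonde determinant formula, so my plan is to establish the identity
\[
\det(V)=\prod_{1\leq j<k\leq n}\left(a_{k}-a_{j}\right)
\]
by induction on the order $n$ of the matrix, using elementary column (or row) operations that do not change the determinant. The base case $n=1$ is trivial since $\det(V)=1$ and the empty product is $1$; the case $n=2$ gives $a_{2}-a_{1}$ directly. For the inductive step I would assume the formula holds for Vandermonde matrices of order $n-1$ and prove it for order $n$.

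First I would reduce the last column. The key maneuver is to clear out the first row: working from the last column backwards, I replace $\operatorname{col}_{k}(V)$ by $\operatorname{col}_{k}(V)-a_{1}\operatorname{col}_{k-1}(V)$ for $k=n,n-1,\dots,2$ in that order. Since each such operation subtracts a multiple of one column from another, the determinant is unchanged. After these operations the entry in row $i$, column $k$ (for $k\geq 2$) becomes $a_{i}^{k-1}-a_{1}a_{i}^{k-2}=a_{i}^{k-2}\left(a_{i}-a_{1}\right)$, while the first row becomes $\left(1,0,0,\dots,0\right)$. Expanding the determinant along this first row (Laplace expansion along row $1$, which is available to us from the Determinants subsection) collapses the computation to the determinant of the $(n-1)\times(n-1)$ minor obtained by deleting the first row and first column.

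Next I would factor the common terms out of that minor. In row $i$ of the reduced minor (corresponding to $a_{i+1}$ for $i=1,\dots,n-1$) every entry carries a factor of $\left(a_{i+1}-a_{1}\right)$, so pulling this scalar out of each row multiplies the determinant by $\prod_{i=2}^{n}\left(a_{i}-a_{1}\right)$. What remains is precisely a Vandermonde matrix in the variables $a_{2},\dots,a_{n}$, whose determinant by the inductive hypothesis equals $\prod_{2\leq j<k\leq n}\left(a_{k}-a_{j}\right)$. Multiplying the extracted factor by the inductive value yields
\[
\left(\prod_{k=2}^{n}\left(a_{k}-a_{1}\right)\right)\prod_{2\leq j<k\leq n}\left(a_{k}-a_{j}\right)=\prod_{1\leq j<k\leq n}\left(a_{k}-a_{j}\right),
\]
which is the desired formula.

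The step requiring the most care is the column-reduction bookkeeping: the operations must be performed in the correct order (from the highest-index column downward) so that each column is modified using its still-unaltered neighbor, and one must verify that the resulting entries factor as $a_{i}^{k-2}\left(a_{i}-a_{1}\right)$ uniformly across all rows. This is the only place where an off-by-one indexing error could creep in, but it is a routine verification rather than a genuine obstacle. Everything else follows formally from invariance of the determinant under column operations, cofactor expansion along a row with a single nonzero entry, multilinearity in the rows, and the inductive hypothesis.
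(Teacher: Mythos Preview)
Your proof is correct and follows the standard induction-by-column-reduction argument for the Vandermonde formula. The paper itself does not prove this lemma at all: it merely states the result and cites page~127 of \cite{Howard}, so there is no in-paper proof to compare against. Your write-up supplies exactly the kind of elementary argument one finds in that reference, and the only delicate point you flagged---performing the column operations from highest index downward so that each step uses an unaltered neighbor---is handled correctly.
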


\section{Intermediate Results}

In this section, we will prove several results which lead to the proof of
Theorem \ref{main}. Put
\begin{equation}
\xi=e^{\frac{2\pi i}{n}}. \label{root of unity}%
\end{equation}
Let $\iota\left(  k\right)  $ is be the inverse of $k$ modulo $n.$ That is
$\iota\left(  k\right)  =\left(  n-k\right)  \operatorname{mod}n.$ Let $M$ be
the linear operator obtained by conjugating $T$ by the Fourier transform. Let
$\Lambda$ be the group of linear operators generated by $M$ and $D.$ Fixing an
ordering for the elements of $\Lambda$, we may write $\Lambda=\left\{
\gamma_{1},\cdots,\gamma_{2n}\right\}  .$ We identify the Hilbert space
$l^{2}\left(  \mathbb{Z}_{n}\right)  $ in the usual way with the Cartesian
product of $n$ copies of the complex plane. For any $z\in\mathbb{C}^{n},$ we
define the map $z\mapsto A\left(  z\right)  $ where $A\left(  z\right)  $ is
the $2n$ by $n$ matrix-valued function whose $k^{th}$ row corresponds to the
vector $\gamma_{k}z$ which is an element of the $\Lambda$-orbit of $z.$ From
our definition, $A$ is regarded as a matrix-valued function taking the vector
$z$ to the matrix
\[
A\left(  z\right)  =\left[
\begin{array}
[c]{ccc}%
\left(  \gamma_{1}z\right)  _{0} & \cdots & \left(  \gamma_{1}z\right)
_{n-1}\\
\vdots & \ddots & \vdots\\
\left(  \gamma_{2n}z\right)  _{0} & \cdots & \left(  \gamma_{2n}z\right)
_{n-1}%
\end{array}
\right]  .
\]
Let $S\left(  z\right)  $ be a fixed submatrix of $A\left(  z\right)  $ of
order $n.$ Fixing an ordering for the rows of $S\left(  z\right)  $, we may
assume that the following holds true.

\begin{description}
\item[Case $\left(  a\right)  $] Either
\begin{equation}
S\left(  z\right)  =\left[
\begin{array}
[c]{cccc}%
z_{0} & z_{1} & \cdots & z_{n-1}\\
z_{0} & \xi z_{1} & \cdots & \xi^{\left(  n-1\right)  }z_{n-1}\\
\vdots & \ddots & \ddots & \vdots\\
z_{0} & \xi^{\left(  n-1\right)  }z_{1} & \cdots & \xi^{\left(  n-1\right)
\left(  n-1\right)  }z_{n-1}%
\end{array}
\right]  \text{ }%
\end{equation}
or
\begin{equation}
S\left(  z\right)  =\left[
\begin{array}
[c]{cccc}%
z_{0} & z_{\iota\left(  1\right)  } & \cdots & z_{i\left(  n-1\right)  }\\
z_{0} & \xi z_{\iota\left(  1\right)  } & \cdots & \xi^{\left(  n-1\right)
}z_{i\left(  n-1\right)  }\\
\vdots & \ddots & \ddots & \vdots\\
z_{0} & \xi^{\left(  n-1\right)  }z_{\iota\left(  1\right)  } & \cdots &
\xi^{\left(  n-1\right)  \left(  n-1\right)  }z_{i\left(  n-1\right)  }%
\end{array}
\right]
\end{equation}

\item[Case $\left(  b\right)  $] or $S\left(  z\right)  $ is equal to%
\begin{equation}
\left[
\begin{array}
[c]{cccccccc}%
z_{0} & \xi^{1l_{0}}z_{1} & \cdots & \xi^{\left(  m-1\right)  l_{0}}z_{m-1} &
\xi^{ml_{0}}z_{m} & \xi^{\left(  m+1\right)  l_{0}}z_{m+1} & \cdots &
\xi^{\left(  n-1\right)  l_{0}}z_{n-1}\\
z_{0} & \xi^{1l_{1}}z_{1} & \cdots & \xi^{\left(  m-1\right)  l_{1}}z_{m-1} &
\xi^{ml_{1}}z_{m} & \xi^{\left(  m+1\right)  l_{1}}z_{m+1} & \cdots &
\xi^{\left(  n-1\right)  l_{1}}z_{n-1}\\
\vdots & \vdots & \ddots & \vdots & \vdots & \vdots & \ddots & \vdots\\
z_{0} & \xi^{1l_{m-1}}z_{1} & \cdots & \xi^{\left(  m-1\right)  l_{m-1}%
}z_{m-1} & \xi^{ml_{m-1}}z_{m} & \xi^{\left(  m+1\right)  l_{m-1}}z_{m+1} &
\cdots & \xi^{\left(  n-1\right)  l_{m-1}}z_{n-1}\\
z_{0} & \xi^{1j_{0}}z_{\iota\left(  1\right)  } & \cdots & \xi^{\left(
m-1\right)  j_{0}}z_{\iota\left(  m-1\right)  } & \xi^{mj_{0}}z_{\iota\left(
m\right)  } & \xi^{\left(  m+1\right)  j_{0}}z_{\iota\left(  m+1\right)  } &
\cdots & \xi^{\left(  n-1\right)  j_{0}}z_{i\left(  n-1\right)  }\\
z_{0} & \xi^{1j_{1}}z_{\iota\left(  1\right)  } & \cdots & \xi^{\left(
m-1\right)  j_{1}}z_{\iota\left(  m-1\right)  } & \xi^{mj_{1}}z_{\iota\left(
m\right)  } & \xi^{\left(  m+1\right)  j_{1}}z_{\iota\left(  m+1\right)  } &
\cdots & \xi^{\left(  n-1\right)  j_{1}}z_{i\left(  n-1\right)  }\\
\vdots & \vdots & \ddots & \vdots & \vdots & \vdots & \ddots & \vdots\\
z_{0} & \xi^{1j_{s-1}}z_{\iota\left(  1\right)  } & \cdots & \xi^{\left(
m-1\right)  j_{s-1}}z_{\iota\left(  m-1\right)  } & \xi^{mj_{s-1}}%
z_{\iota\left(  m\right)  } & \xi^{\left(  m+1\right)  j_{s-1}}z_{\iota\left(
m+1\right)  } & \cdots & \xi^{\left(  n-1\right)  j_{s-1}}z_{i\left(
n-1\right)  }%
\end{array}
\right]
\end{equation}
for some integers $l_{0},\cdots,l_{m-1},j_{0},j_{1},\cdots,j_{s-1}$ such that
$m+s=n,$ if $k\neq k^{\prime}$ then $l_{k}\neq l_{k^{\prime}}$ and if $k\neq
k^{\prime}$ then $j_{k}\neq j_{k^{\prime}}.$
\end{description}

Let $\mathbf{q}$ be the non-trivial polynomial defined by
\begin{equation}
\mathbf{q}\left(  t\right)  ={\prod\limits_{k=1}^{n-1}}t^{k}.
\end{equation}

\subsection{Case $\left(  a\right)  $}

Let us consider the complex curve
\begin{equation}%
\mathbb{C}
\backepsilon t\mapsto\mathfrak{c}\left(  t\right)  =\left[
\begin{array}
[c]{ccccc}%
1 & t & t^{2} & \cdots & t^{n-1}%
\end{array}
\right]  ^{T}\in\mathbb{C}^{n}.
\end{equation}
The image of this curve under the function $S$ is simply computed by replacing
each variable $z_{k}$ by $t^{k}.$ If we assume that $S\left(  z\right)  $ is
equal to the first matrix described in Case $\left(  a\right)  $ then
\begin{equation}
\det\left(  S\left(  1,t,t^{2},\cdots,t^{n-1}\right)  \right)  =\mathbf{q}%
\left(  t\right)  \left\vert
\begin{array}
[c]{cccc}%
1 & 1 & \cdots & 1\\
1 & \xi & \cdots & \xi^{\left(  n-1\right)  }\\
1 & \xi^{2} & \cdots & \xi^{2\left(  n-1\right)  }\\
\vdots & \ddots & \ddots & \vdots\\
1 & \xi^{\left(  n-1\right)  } & \cdots & \xi^{\left(  n-1\right)  \left(
n-1\right)  }%
\end{array}
\right\vert =\left(  {\prod\limits_{0\leq j<k\leq n-1}}\left(  \xi^{k}-\xi
^{j}\right)  \right)  \times{\prod\limits_{k=1}^{n-1}}t^{k}.
\end{equation}
In a similar fashion, it is easy to verify that if $S\left(  z\right)  $ is
equal to the second matrix described in Case $\left(  a\right)  $ then
\[
\det\left(  S\left(  1,t,t^{2},\cdots,t^{n-1}\right)  \right)  =\left(
{\prod\limits_{0\leq j<k\leq n-1}}\left(  \xi^{k}-\xi^{j}\right)  \right)
\times\mathbf{q}\left(  t\right)  .
\]
Thus, the determinants of the matrices described in Case (a) are non-zero polynomials.

\subsection{Case $\left(  b\right)  $}

Define matrices $A_{1}\left(  t\right)  ,A_{2}\left(  t\right)  $ as
\[
\left[
\begin{array}
[c]{ccc}%
\xi^{0l_{0}}t & \cdots & \xi^{\left(  m-1\right)  l_{0}}t^{m-1}\\
\xi^{0l_{1}}t & \cdots & \xi^{\left(  m-1\right)  l_{1}}t^{m-1}\\
\vdots & \ddots & \vdots\\
\xi^{0l_{m-1}}t & \cdots & \xi^{\left(  m-1\right)  l_{m-1}}t^{m-1}%
\end{array}
\right]  ,\left[
\begin{array}
[c]{cccc}%
\xi^{ml_{0}}t^{m} & \xi^{\left(  m+1\right)  l_{0}}t^{m+1} & \cdots &
\xi^{\left(  n-1\right)  l_{0}}t^{n-1}\\
\xi^{ml_{1}}t^{m} & \xi^{\left(  m+1\right)  l_{1}}t^{m+1} & \cdots &
\xi^{\left(  n-1\right)  l_{1}}t^{n-1}\\
\vdots & \vdots & \ddots & \vdots\\
\xi^{ml_{m-1}}t^{m} & \xi^{\left(  m+1\right)  l_{m-1}}t^{m+1} & \cdots &
\xi^{\left(  n-1\right)  l_{m-1}}t^{n-1}%
\end{array}
\right]
\]
respectively. Additionally, we define $A_{3}\left(  t\right)  ,$ and
$A_{4}\left(  t\right)  $ as
\[
\left[
\begin{array}
[c]{ccc}%
\xi^{0\times j_{0}}t^{n-1} & \cdots & \xi^{\left(  m-1\right)  j_{0}%
}t^{n-\left(  m-1\right)  }\\
\xi^{0\times j_{1}}t^{n-1} & \cdots & \xi^{\left(  m-1\right)  j_{1}%
}t^{n-\left(  m-1\right)  }\\
\vdots & \ddots & \vdots\\
\xi^{0\times j_{s-1}}t^{n-1} & \cdots & \xi^{\left(  m-1\right)  j_{s-1}%
}t^{n-\left(  m-1\right)  }%
\end{array}
\right]  \text{ and }\left[
\begin{array}
[c]{cccc}%
\xi^{mj_{0}}t^{n-m} & \xi^{\left(  m+1\right)  j_{0}}t^{n-m-1} & \cdots &
\xi^{\left(  n-1\right)  j_{0}}t\\
\xi^{mj_{1}}t^{n-m} & \xi^{\left(  m+1\right)  j_{1}}t^{n-m-1} & \cdots &
\xi^{\left(  n-1\right)  j_{1}}t\\
\vdots & \vdots & \ddots & \vdots\\
\xi^{mj_{s-1}}t^{n-m} & \xi^{m+1j_{s-1}}t^{n-m-1} & \cdots & \xi^{\left(
n-1\right)  j_{s-1}}t
\end{array}
\right]
\]
respectively. Note that $A_{1}\left(  t\right)  $ is a square matrix of order
$m,$ $A_{4}\left(  t\right)  $ is a square matrix of order $n-m=s,$
$A_{2}\left(  t\right)  $ and $A_{3}\left(  t\right)  $ are matrices of order
$m\times s$ and $s\times m$ respectively. If we assume that $S\left(
z\right)  $ is as described in Case $\left(  b\right)  $ then $S\left(
\mathfrak{c}\left(  t\right)  \right)  $ is given in block form as follows
\begin{equation}
\left[
\begin{array}
[c]{cc}%
A_{1}\left(  t\right)   & A_{2}\left(  t\right)  \\
A_{3}\left(  t\right)   & A_{4}\left(  t\right)
\end{array}
\right]  .\label{sub}%
\end{equation}

\begin{lemma}
The following holds true.

\begin{enumerate}
\item $\det A_{1}\left(  t\right)  ={\prod\limits_{0\leq k<j\leq m-1}}\left(
\xi^{l_{j}}-\xi^{l_{k}}\right)  \left(  t\cdots t^{m-1}\right)  .$

\item $\det A_{4}\left(  t\right)  =\left(  \xi^{mj_{0}}\xi^{mj_{1}}\cdots
\xi^{mj_{s-1}}\right)  {\prod\limits_{0\leq k<j\leq n-1-m}}\left(
\xi^{j_{\ell}}-\xi^{j_{k}}\right)  \left(  t^{n-m}t^{n-m-1}\cdots
t^{2}t\right)  .$
\end{enumerate}
\end{lemma}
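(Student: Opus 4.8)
The plan is to compute the two determinants directly, since each of $A_1(t)$ and $A_4(t)$ is essentially a Vandermonde matrix with its columns scaled by powers of $t$. Consider first $A_1(t)$. Its $(k,j)$ entry (with rows indexed $k=0,\dots,m-1$ and columns $j=0,\dots,m-1$) is $\xi^{j l_k}\,t^{j}$. I would factor the common scalar $t^{j}$ out of column $j$ for each $j=1,\dots,m-1$ (the first column, $j=0$, contributes $t^0=1$, though note the statement writes the product as $t\cdots t^{m-1}$, which I take to mean $\prod_{j=1}^{m-1}t^{j}$). What remains is the matrix with $(k,j)$ entry $\xi^{jl_k}=(\xi^{l_k})^{j}$, which is precisely a Vandermonde matrix in the nodes $a_k:=\xi^{l_k}$, $k=0,\dots,m-1$. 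Applying the Vandermonde determinant lemma (equation \eqref{Vand}) gives $\prod_{0\le k<j\le m-1}(a_j-a_k)=\prod_{0\le k<j\le m-1}(\xi^{l_j}-\xi^{l_k})$. Combining the two factors yields exactly the claimed formula for $\det A_1(t)$.

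For $A_4(t)$, the structure is the same but with the columns taken in the reverse order and an extra common factor in each row. Explicitly, the $(k,j)$ entry of $A_4(t)$ (rows $k=0,\dots,s-1$, columns $j=0,\dots,s-1$, recalling $s=n-m$) is $\xi^{(m+j)j_k}\,t^{\,n-m-j}$. First factor $\xi^{m j_k}$ out of row $k$ for each $k$, producing the scalar $\prod_{k=0}^{s-1}\xi^{m j_k}=\xi^{mj_0}\xi^{mj_1}\cdots\xi^{mj_{s-1}}$; then factor $t^{\,n-m-j}$ out of column $j$, producing $\prod_{j=0}^{s-1}t^{\,n-m-j}=t^{n-m}t^{n-m-1}\cdots t$. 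The residual matrix has $(k,j)$ entry $\xi^{j\,j_k}=(\xi^{j_k})^{j}$, again a Vandermonde matrix, now in the nodes $\xi^{j_0},\dots,\xi^{j_{s-1}}$. By \eqref{Vand} its determinant is $\prod_{0\le k<j\le s-1}(\xi^{j_j}-\xi^{j_k})$; since $s-1=n-1-m$, this matches the product $\prod_{0\le k<j\le n-1-m}(\xi^{j_\ell}-\xi^{j_k})$ in the statement (with the dummy index $\ell$ playing the role of $j$). Multiplying the three factors together gives the asserted expression for $\det A_4(t)$.

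There is essentially no obstacle here: both claims reduce, after pulling out the monomial factors from rows and columns, to the classical Vandermonde evaluation already recorded in the excerpt. The only points requiring a little care are bookkeeping ones — tracking which power of $t$ comes out of which column (and for $A_4$ that the columns run in decreasing degree, so one is extracting $t^{n-m},t^{n-m-1},\dots,t$ rather than $t,t^2,\dots$), and noting that the hypothesis $l_k\neq l_{k'}$ for $k\neq k'$, together with the fact that $n$ is odd, is what will later guarantee the Vandermonde nodes $\xi^{l_k}$ are distinct so that $\det A_1(t)\not\equiv 0$; the analogous remark applies to the $j_k$ and $\det A_4(t)$. For the present lemma, however, distinctness is not even needed — the determinant formulas hold as polynomial identities in $t$ regardless — so the proof is just the two factor-extraction-then-Vandermonde computations described above.
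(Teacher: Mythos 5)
Your proposal is correct and follows essentially the same route as the paper, whose proof simply invokes the multilinearity of the determinant (to pull the monomial factors out of rows and columns) together with the Vandermonde formula (\ref{Vand}); you have merely written out the bookkeeping explicitly. No gap to report.
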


\begin{proof}
The multi-linearity of the determinant function together with Formula
(\ref{Vand}) give the desired result.
\end{proof}

\noindent Let $\mathbf{p}_{m}=\left\{  p_{1}<p_{2}<\cdots<p_{m-1}%
<p_{m}\right\}  $ be an ordered subset of $\mathbb{Z}_{n}$ of cardinality $m$
and set
\begin{equation}
\mathbf{i}_{m}=\left\{  0<1<\cdots<m-1\right\}  .\label{i(m)}%
\end{equation}
Let $S\left(  \mathfrak{c}\left(  t\right)  \right)  $ be the matrix given in
(\ref{sub}). To avoid cluster of notation, from now on, we shall write
$S\left(  \mathfrak{c}\left(  t\right)  \right)  =\mathbf{S}_{t}\mathbf{.}$
Put
\begin{equation}
C=\left(  \xi^{mj_{0}}\xi^{mj_{1}}\cdots\xi^{mj_{s-1}}\right)  {\prod
\limits_{0\leq k<\ell\leq n-1-m}}\left(  \xi^{j_{\ell}}-\xi^{j_{k}}\right)
{\prod\limits_{0\leq k<j\leq m-1}}\left(  \xi^{l_{j}}-\xi^{l_{k}}\right)  \in%
\mathbb{Q}
\left(  \xi\right)  .\label{C}%
\end{equation}

\noindent For $n,m\in\mathbb{N}$ such that $n>2$ and $m\leq n,$ define
\begin{align*}
\kappa_{\left(  n,m\right)  }  & ={\sum\limits_{k=1}^{m-1}}\left(  k\right)
+{\sum\limits_{k=1}^{n-m}}\left(  k\right)  =\frac{1}{2}m\left(  m-1\right)
-\frac{1}{2}\left(  m-n\right)  \left(  n-m+1\right)  \\
& =\frac{2m^{2}-2mn-2m+n^{2}+n}{2}\in%
\mathbb{N}
.
\end{align*}

\begin{lemma}
\label{im}The product of the determinants of $\mathbf{S}_{t}\left(
\mathbf{i}_{m},\mathbf{i}_{m}\right)  $ and $\mathbf{S}_{t}\left(
\mathbf{i}_{m},\mathbf{i}_{m}\right)  ^{c}$ is equal to the non-trivial
polynomial $Ct^{\kappa_{\left(  n,m\right)  }}.$
\end{lemma}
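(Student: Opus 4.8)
The plan is to recognize the two matrices in the statement as the diagonal blocks of $\mathbf{S}_t$ and then just multiply the determinant formulas furnished by the preceding lemma. First I would note that, by the way $\mathbf{S}_t = S(\mathfrak{c}(t))$ was written in block form $(\ref{sub})$, its top-left $m\times m$ block is $A_1(t)$ and its bottom-right $s\times s$ block is $A_4(t)$, where $s = n-m$. Since the ordered index set $\mathbf{i}_m = \{0<1<\cdots<m-1\}$ selects the first $m$ rows and the first $m$ columns of $\mathbf{S}_t$, we have $\mathbf{S}_t(\mathbf{i}_m,\mathbf{i}_m) = A_1(t)$. Deleting those same rows and columns — which is exactly how the complement $(\cdot)^c$ was defined in the Determinants subsection — leaves precisely the last $s$ rows and last $s$ columns, so $\mathbf{S}_t(\mathbf{i}_m,\mathbf{i}_m)^c = A_4(t)$.

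Next I would simply multiply the two formulas from the previous lemma. The scalar parts combine to $(\xi^{mj_0}\cdots\xi^{mj_{s-1}})\prod_{0\le k<\ell\le n-1-m}(\xi^{j_\ell}-\xi^{j_k})\prod_{0\le k<j\le m-1}(\xi^{l_j}-\xi^{l_k})$, which is exactly the constant $C$ of $(\ref{C})$. The powers of $t$ combine as $(t^1 t^2\cdots t^{m-1})(t^1 t^2\cdots t^{n-m}) = t^{(1+2+\cdots+(m-1))+(1+2+\cdots+(n-m))} = t^{\kappa_{(n,m)}}$, directly from the definition $\kappa_{(n,m)} = \sum_{k=1}^{m-1}k + \sum_{k=1}^{n-m}k$. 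Hence the product of the two determinants equals $C\,t^{\kappa_{(n,m)}}$.

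It then remains to verify that this polynomial is non-trivial, i.e. $C\neq 0$, which is the only step requiring a moment of care rather than pure bookkeeping. The factor $\xi^{mj_0}\cdots\xi^{mj_{s-1}}$ is a product of $n$-th roots of unity, hence nonzero. For the two Vandermonde-type products it suffices that $l_0,\dots,l_{m-1}$ are pairwise distinct modulo $n$ and likewise $j_0,\dots,j_{s-1}$: this is the case because these exponents record distinct powers of the operator $M$, which has order $n$, so $\xi^{l_j}\neq\xi^{l_k}$ and $\xi^{j_\ell}\neq\xi^{j_k}$ whenever the indices differ, and each factor of the two products is therefore nonzero. Consequently $C\neq 0$, so $C\,t^{\kappa_{(n,m)}}$ is a non-trivial polynomial, completing the argument.
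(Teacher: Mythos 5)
Your proposal is correct and follows essentially the same route as the paper: identify $\mathbf{S}_t(\mathbf{i}_m,\mathbf{i}_m)$ and its complement with the diagonal blocks $A_1(t)$ and $A_4(t)$, multiply the Vandermonde-type determinant formulas from the preceding lemma to get $Ct^{\kappa_{(n,m)}}$, and observe that no factor of $C$ vanishes. Your extra remark that the exponents $l_k$ (resp. $j_k$) are distinct elements of $\mathbb{Z}_n$, so $\xi^{l_j}\neq\xi^{l_k}$, is just a slightly more explicit justification of the non-vanishing step the paper states directly.
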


\begin{proof}
We check that the determinant of the matrix $\mathbf{S}_{t}\left(
\mathbf{i}_{m},\mathbf{i}_{m}\right)  $ is a monomial in the variable $t$
which is given by
\begin{equation}
\left(  t\cdots t^{m-1}\right)  \cdot{\prod\limits_{0\leq k<j\leq m-1}}\left(
\xi^{l_{j}}-\xi^{l_{k}}\right)  .\label{product 1}%
\end{equation}
Moreover, the determinant of $\left[  \mathbf{S}_{t}\left(  \mathbf{i}%
_{m},\mathbf{i}_{m}\right)  \right]  ^{c}$ is given by%
\begin{equation}
\left(  \xi^{mj_{0}}\xi^{mj_{1}}\cdots\xi^{mj_{s-1}}\right)  \cdot
{\prod\limits_{0\leq k<\ell\leq n-1-m}}\left(  \xi^{j_{\ell}}-\xi^{j_{k}%
}\right)  \cdot t^{n-m}t^{n-m-1}\cdots t^{2}t.\label{product 2}%
\end{equation}
In summary, taking the product of (\ref{product 1}) and (\ref{product 2}) we
obtain $Ct^{\kappa_{\left(  n,m\right)  }}.$ Moreover, since $\xi^{j_{\ell}%
}-\xi^{j_{k}}\neq0$ and $\xi^{l_{j}}-\xi^{l_{k}}\neq0$ for $0\leq k<j\leq
n-1-m$ and $0\leq k<j\leq m-1$, it must be case that the monomial
$Ct^{\kappa_{\left(  n,m\right)  }}$ is a non-trivial polynomial.
\end{proof}

\noindent Observe that $\mathbf{p}_{m}=\left(  \mathbf{i}_{m}\cap
\mathbf{p}_{m}\right)  \cup\left(  \mathbf{p}_{m}-\mathbf{i}_{m}\right)  $ and
$\mathbf{p}_{m}-\mathbf{i}_{m}$ and $\mathbf{i}_{m}-\mathbf{p}_{m}$ are
subsets of $\mathbb{Z}_{n}$ of the same cardinality. Furthermore if
$\mathbf{p}_{m}$ and $\mathbf{i}_{m}$ are not equal as sets, we shall write
$\mathbf{i}_{m}=\left(  \mathbf{i}_{m}\cap\mathbf{p}_{m}\right)  \cup\left(
\mathbf{i}_{m}-\mathbf{p}_{m}\right)  $ and
\begin{align*}
\mathbf{i}_{m}-\mathbf{p}_{m}  &  =\left\{  \mathbf{i}_{1}<\cdots
<\mathbf{i}_{u\left(  \mathbf{p}_{m}\right)  }\right\}  \text{, }\\
\mathbf{p}_{m}-\mathbf{i}_{m}  &  =\left\{  \mathbf{i}_{1}+\ell_{1}\left(
\mathbf{p}_{m}\right)  <\cdots<\mathbf{i}_{u\left(  \mathbf{p}_{m}\right)
}+\ell_{u\left(  \mathbf{p}_{m}\right)  }\left(  \mathbf{p}_{m}\right)
\right\}
\end{align*}
such that each $\ell_{k}\left(  \mathbf{p}_{m}\right)  $ is a positive number
belonging to the set $\mathbb{Z}_{n}$, and $u\left(  \mathbf{p}_{m}\right)
=\left\vert \mathbf{i}_{m}-\mathbf{p}_{m}\right\vert >0.$ We recall that
$\mathbf{S}_{t}\left(  \mathbf{i}_{m},\mathbf{p}_{m}\right)  $ is the matrix
obtained by selecting all entries of the matrix $\mathbf{S}_{t}$ of the type
$\left[  \mathbf{S}_{t}\right]  _{k,k^{\prime}}$ where $k-1$ is in
$\mathbf{i}_{m}$ and $k^{\prime}-1$ is an element of $\mathbf{p}_{m}.$

\begin{lemma}
If $n$ is odd then
\[
\det\left(  \mathbf{S}_{t}\left(  \mathbf{i}_{m},\mathbf{p}_{m}\right)
\right)  \det\left(  \mathbf{S}_{t}\left(  \mathbf{i}_{m},\mathbf{p}%
_{m}\right)  ^{c}\right)  =Ct^{\kappa_{\left(  n,m\right)  }}%
\Longleftrightarrow\mathbf{i}_{m}=\mathbf{p}_{m}.
\]

\end{lemma}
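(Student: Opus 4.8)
The direction ``$\Longleftarrow$'' is precisely Lemma \ref{im}, so the plan is to prove the contrapositive of ``$\Longrightarrow$'': if $\mathbf{p}_{m}\neq\mathbf{i}_{m}$ then $\det\left(\mathbf{S}_{t}\left(\mathbf{i}_{m},\mathbf{p}_{m}\right)\right)\det\left(\mathbf{S}_{t}\left(\mathbf{i}_{m},\mathbf{p}_{m}\right)^{c}\right)\neq Ct^{\kappa_{(n,m)}}$. The key structural observation I would record first is that \emph{each of the two determinants appearing here is a single monomial in $t$}. Indeed, reading off the matrix of Case $(b)$ after the substitution $z_{k}=t^{k}$, every entry of $\mathbf{S}_{t}$ lying in a row of $\mathbf{i}_{m}$ and in column $c$ has the form $\xi^{cl_{k}}t^{c}$, while every entry lying in a row outside $\mathbf{i}_{m}$ and in column $c$ has the form $\xi^{cj_{k}}t^{\iota(c)}$; hence in $\mathbf{S}_{t}\left(\mathbf{i}_{m},\mathbf{p}_{m}\right)$ the column indexed by $p\in\mathbf{p}_{m}$ is $t^{p}$ times a column of constants, and in the complementary minor the column indexed by $q\in\mathbb{Z}_{n}\setminus\mathbf{p}_{m}$ is $t^{\iota(q)}$ times a column of constants. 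Pulling all of these powers of $t$ outside the two determinants, the product in question equals $c(\mathbf{p}_{m})\,t^{\delta(\mathbf{p}_{m})}$ for some constant $c(\mathbf{p}_{m})\in\mathbb{Q}(\xi)$ and exponent $\delta(\mathbf{p}_{m})=\sum_{p\in\mathbf{p}_{m}}p+\sum_{q\in\mathbb{Z}_{n}\setminus\mathbf{p}_{m}}\iota(q)$. Since $C\neq0$ by Lemma \ref{im}, this product is either $0$ or a monomial of degree exactly $\delta(\mathbf{p}_{m})$, neither of which equals $Ct^{\kappa_{(n,m)}}$ unless $\delta(\mathbf{p}_{m})=\kappa_{(n,m)}$; so the whole problem reduces to a count of the $t$-degree.

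Next I would compute $\delta(\mathbf{p}_{m})$ and compare it with $\kappa_{(n,m)}$, writing $\sigma=\sum_{p\in\mathbf{p}_{m}}p$ and using $\sum_{q\notin\mathbf{p}_{m}}q=\binom{n}{2}-\sigma$. One must split on whether $0\in\mathbf{p}_{m}$, since $\iota$ acts as $q\mapsto n-q$ on nonzero residues but fixes $0$. If $0\in\mathbf{p}_{m}$ then $\delta(\mathbf{p}_{m})=2\sigma+(n-m)n-\binom{n}{2}$; specializing to $\mathbf{p}_{m}=\mathbf{i}_{m}$ gives $\sigma=\binom{m}{2}$ and reproduces the stated value of $\kappa_{(n,m)}$. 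Because $\binom{m}{2}$ is the \emph{minimum} of $\sigma$ over all $m$-element subsets of $\{0,\dots,n-1\}$, attained only at $\{0,1,\dots,m-1\}=\mathbf{i}_{m}$, in this case $\mathbf{p}_{m}\neq\mathbf{i}_{m}$ forces $\sigma>\binom{m}{2}$ and hence $\delta(\mathbf{p}_{m})>\kappa_{(n,m)}$. If instead $0\notin\mathbf{p}_{m}$, then exactly one summand of $\sum_{q\notin\mathbf{p}_{m}}\iota(q)$ is $\iota(0)=0$, which lowers the exponent by $n$ relative to the previous case: $\delta(\mathbf{p}_{m})=2\sigma+(n-m-1)n-\binom{n}{2}$, so $\delta(\mathbf{p}_{m})-\kappa_{(n,m)}=2\bigl(\sigma-\binom{m}{2}\bigr)-n$. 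This is where the hypothesis enters: when $n$ is odd this difference is an odd integer, hence nonzero, so $\delta(\mathbf{p}_{m})\neq\kappa_{(n,m)}$ irrespective of $\sigma$. Combining the two cases, $\mathbf{p}_{m}\neq\mathbf{i}_{m}$ always yields $\delta(\mathbf{p}_{m})\neq\kappa_{(n,m)}$, which completes the argument.

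I expect the main obstacle to be essentially bookkeeping of two kinds: first, verifying carefully that in the complementary minor each column carries a \emph{uniform} power of $t$ equal to $\iota(q)$ rather than $q$, so that the product is genuinely a monomial whose degree can be pinned down; and second, handling the anomalous value $\iota(0)=0$, which is exactly what forces the split into the two cases above and is also the only place the parity of $n$ is used. No deeper input is needed beyond Lemma \ref{im} and the explicit shape of $\mathbf{S}_{t}$: once the formula for $\delta(\mathbf{p}_{m})$ is in hand, the conclusion is immediate from the minimality of $\binom{m}{2}$ in one case and a parity comparison in the other.
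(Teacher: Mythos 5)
Your proof is correct and takes essentially the same route as the paper: both observe that the product of the two complementary minors is a single monomial $q\,t^{\delta}$, compare $\delta$ with $\kappa_{(n,m)}$, split on whether $0\in\mathbf{p}_{m}$, and use the oddness of $n$ exactly where $\iota(0)=0$ produces the $-n$ shift. The only difference is cosmetic bookkeeping: you write $\delta-\kappa_{(n,m)}$ as $2\bigl(\sigma-\binom{m}{2}\bigr)$ or $2\bigl(\sigma-\binom{m}{2}\bigr)-n$ and invoke the strict minimality of $\binom{m}{2}$ among $m$-element subset sums, whereas the paper expresses the same quantity as $2\sum_{k}\ell_{k}(\mathbf{p}_{m})$ or $2\sum_{k}\ell_{k}(\mathbf{p}_{m})-n$ with each shift $\ell_{k}(\mathbf{p}_{m})>0$.
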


\begin{proof}
According to Lemma \ref{im}, if $\mathbf{i}_{m}=\mathbf{p}_{m}$ then
\[
\det\left(  \mathbf{S}_{t}\left(  \mathbf{i}_{m},\mathbf{p}_{m}\right)
\right)  \det\left(  \mathbf{S}_{t}\left(  \mathbf{i}_{m},\mathbf{p}%
_{m}\right)  ^{c}\right)  =Ct^{\kappa_{\left(  n,m\right)  }}.
\]
For the other way around, let us suppose that $\mathbf{i}_{m}\neq
\mathbf{p}_{m}.$ Then $u\left(  \mathbf{p}_{m}\right)  =\mathrm{card}\left(
\mathbf{i}_{m}-\mathbf{p}_{m}\right)  >0.$ First, we observe that, there
exists integer $\delta$ and a complex number $q$ such that
\[
\det\left(  \mathbf{S}_{t}\left(  \mathbf{i}_{m},\mathbf{p}_{m}\right)
\right)  \det\left(  \mathbf{S}_{t}\left(  \mathbf{i}_{m},\mathbf{p}%
_{m}\right)  ^{c}\right)  =qt^{\delta}.
\]
Secondly, the monomial $t^{\delta}$ can be computed by multiplying
\[
t^{\kappa_{\left(  n,m\right)  }}=\left(  t\cdots t^{m-1}\right)  \left(
t^{n-m}t^{n-\left(  m+1\right)  }\cdots t^{2}t\right)
\]
by a suitable rational function of the variable $t$. Indeed, since the set
$\mathbf{p}_{m}$ is obtained by taking the union of $\mathbf{i}_{m}%
\cap\mathbf{p}_{m}$ and $\mathbf{p}_{m}-$\textbf{$\mathbf{i}_{m}$}$\mathbf{,}$
the above-mentioned monomial $t^{\delta}$ is the product of $\left(  t\cdots
t^{m-1}\right)  \left(  t^{n-m}t^{n-\left(  m+1\right)  }\cdots t^{2}t\right)
$ and $r\left(  t\right)  $ where
\[
r\left(  t\right)  =\dfrac{t^{\sum_{k\in\iota\left(  \mathbf{i}_{m}%
-\mathbf{p}_{m}\right)  \cup\mathbf{p}_{m}-\mathbf{i}_{m}}\left(  k\right)  }%
}{t^{\sum_{k\in\left(  \mathbf{i}_{m}-\mathbf{p}_{m}\right)  \cup\iota\left(
\mathbf{p}_{m}-\mathbf{i}_{m}\right)  }\left(  k\right)  }}.
\]
Next, it is clear that $r\left(  t\right)  =t^{\sigma}$ for some integer
$\sigma,$ and $\sigma$ is computed as follows. If zero is an element of
$\mathbf{i}_{m}\mathbf{-\mathbf{p}_{m}}$ then
\begin{align*}
\sigma &  =%
{\displaystyle\sum\limits_{k=1}^{u\left(  \mathbf{p}_{m}\right)  }}
\iota\left(  \mathbf{i}_{k}\right)  +\left(  \mathbf{i}_{k}+\ell_{k}\left(
\mathbf{p}_{m}\right)  \right)  -%
{\displaystyle\sum\limits_{k=1}^{u\left(  \mathbf{p}_{m}\right)  }}
\mathbf{i}_{k}+\iota\left(  \mathbf{i}_{k}+\ell_{k}\left(  \mathbf{p}%
_{m}\right)  \right)  \\
&  =\iota\left(  \mathbf{i}_{1}\right)  +\left(  \mathbf{i}_{1}+\ell
_{1}\left(  \mathbf{p}_{m}\right)  \right)  +%
{\displaystyle\sum\limits_{k=2}^{u\left(  \mathbf{p}_{m}\right)  }}
\iota\left(  \mathbf{i}_{k}\right)  +\left(  \mathbf{i}_{k}+\ell_{k}\left(
\mathbf{p}_{m}\right)  \right)  -%
{\displaystyle\sum\limits_{k=1}^{u\left(  \mathbf{p}_{m}\right)  }}
\mathbf{i}_{k}+\iota\left(  \mathbf{i}_{k}+\ell_{k}\left(  \mathbf{p}%
_{m}\right)  \right)  \\
&  =\iota\left(  \mathbf{i}_{1}\right)  +\left(  \mathbf{i}_{1}+\ell
_{1}\left(  \mathbf{p}_{m}\right)  \right)  -\mathbf{i}_{1}-\iota\left(
\mathbf{i}_{1}+\ell_{1}\left(  \mathbf{p}_{m}\right)  \right)  \\
&  +%
{\displaystyle\sum\limits_{k=2}^{u\left(  \mathbf{p}_{m}\right)  }}
\iota\left(  \mathbf{i}_{k}\right)  +\left(  \mathbf{i}_{k}+\ell_{k}\left(
\mathbf{p}_{m}\right)  \right)  -%
{\displaystyle\sum\limits_{k=2}^{u\left(  \mathbf{p}_{m}\right)  }}
\mathbf{i}_{k}+\iota\left(  \mathbf{i}_{k}+\ell_{k}\left(  \mathbf{p}%
_{m}\right)  \right)  \\
&  =\left(  \ast\right)
\end{align*}
Next,
\begin{align*}
\left(  \ast\right)   &  =\ell_{1}\left(  \mathbf{p}_{m}\right)  -\iota\left(
\ell_{1}\left(  \mathbf{p}_{m}\right)  \right)  +%
{\displaystyle\sum\limits_{k=2}^{u\left(  \mathbf{p}_{m}\right)  }}
\left(  \iota\left(  \mathbf{i}_{k}\right)  +\left(  \mathbf{i}_{k}+\ell
_{k}\left(  \mathbf{p}_{m}\right)  \right)  \right)  -\left(  \mathbf{i}%
_{k}+\iota\left(  \mathbf{i}_{k}+\ell_{k}\left(  \mathbf{p}_{m}\right)
\right)  \right)  \\
&  =\ell_{1}\left(  \mathbf{p}_{m}\right)  -\left(  n-\ell_{1}\left(
\mathbf{p}_{m}\right)  \right)  +%
{\displaystyle\sum\limits_{k=2}^{u\left(  \mathbf{p}_{m}\right)  }}
\left(  \iota\left(  \mathbf{i}_{k}\right)  +\left(  \mathbf{i}_{k}+\ell
_{k}\left(  \mathbf{p}_{m}\right)  \right)  \right)  -\left(  \mathbf{i}%
_{k}+\iota\left(  \mathbf{i}_{k}+\ell_{k}\left(  \mathbf{p}_{m}\right)
\right)  \right)  \\
&  =\ell_{1}\left(  \mathbf{p}_{m}\right)  -\left(  n-\ell_{1}\left(
\mathbf{p}_{m}\right)  \right)  +%
{\displaystyle\sum\limits_{k=2}^{u\left(  \mathbf{p}_{m}\right)  }}
2\ell_{k}\left(  \mathbf{p}_{m}\right)  \\
&  =-n+2\ell_{1}\left(  \mathbf{p}_{m}\right)  +%
{\displaystyle\sum\limits_{k=2}^{u\left(  \mathbf{p}_{m}\right)  }}
2\ell_{k}\left(  \mathbf{p}_{m}\right)  \\
&  =\left(
{\displaystyle\sum\limits_{k=1}^{u\left(  \mathbf{p}_{m}\right)  }}
2\ell_{k}\left(  \mathbf{p}_{m}\right)  \right)  -n.
\end{align*}
Since $n$ is odd, the integer $\sigma$ cannot be equal to zero. On the other
hand, if zero is not an element of $\mathbf{i-p}$ then
\begin{align*}
\sigma &  =%
{\displaystyle\sum\limits_{k=1}^{u\left(  \mathbf{p}_{m}\right)  }}
n-\mathbf{i}_{k}+\left(  \mathbf{i}_{k}+\ell_{k}\left(  \mathbf{p}_{m}\right)
\right)  -%
{\displaystyle\sum\limits_{k=1}^{u\left(  \mathbf{p}_{m}\right)  }}
\mathbf{i}_{k}+n-\left(  \mathbf{i}_{k}+\ell_{k}\left(  \mathbf{p}_{m}\right)
\right)  \\
&  =%
{\displaystyle\sum\limits_{k=1}^{u\left(  \mathbf{p}_{m}\right)  }}
\left(  n-\mathbf{i}_{k}+\left(  \mathbf{i}_{k}+\ell_{k}\left(  \mathbf{p}%
_{m}\right)  \right)  \right)  -\left(  \mathbf{i}_{k}+n-\left(
\mathbf{i}_{k}+\ell_{k}\left(  \mathbf{p}_{m}\right)  \right)  \right)  =%
{\displaystyle\sum\limits_{k=1}^{u\left(  \mathbf{p}_{m}\right)  }}
2\ell_{k}\left(  \mathbf{p}_{m}\right)  .
\end{align*}
In this case, since each $\ell_{k}\left(  \mathbf{p}_{m}\right)  $ is
positive, $r\left(  t\right)  $ must be a non-trivial homogeneous polynomial
in the variable $t.$ It follows that
\begin{align*}
\det\left(  \mathbf{S}_{t}\left(  \mathbf{i}_{m},\mathbf{p}_{m}\right)
\right)  \det\left(  \mathbf{S}_{t}\left(  \mathbf{i}_{m},\mathbf{p}%
_{m}\right)  ^{c}\right)   &  =q\left(  t\cdots t^{m-1}\right)  \left(
t^{n-m}t^{n-\left(  m+1\right)  }\cdots t^{2}t\right)  r\left(  t\right)  \\
&  =q\left(  t\cdots t^{m-1}\right)  \left(  t^{n-m}t^{n-\left(  m+1\right)
}\cdots t^{2}t\right)  t^{\delta}%
\end{align*}
and $\delta$ is a not equal to zero. Thus,
\[
\det\left(  \mathbf{S}_{t}\left(  \mathbf{i}_{m},\mathbf{p}_{m}\right)
\right)  \det\left(  \mathbf{S}_{t}\left(  \mathbf{i}_{m},\mathbf{p}%
_{m}\right)  ^{c}\right)  \neq Ct^{\frac{2m^{2}-2mn-2m+n^{2}+n}{2}}.
\]

\end{proof}

\begin{lemma}
The following holds true.

\begin{enumerate}
\item Every submatrix of order $n$ of $A\left(  \mathfrak{c}\left(  t\right)
\right) $ is a non-vanishing element of the ring $\mathbb{Q}\left(
\xi\right)  \left[  t\right] .$

\item Every submatrix of order $n$ of $A\left(  \mathfrak{c}\left(  t\right)
\right)  $ is a non-vanishing element of the ring $\mathbb{Q}\left(
\xi\right)  \left[  t\right]  $ of degree less than or equal to $n\left(
n-1\right)  .$
\end{enumerate}
\end{lemma}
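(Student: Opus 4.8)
The plan is to handle separately the two kinds of order-$n$ submatrices of $A\left(\mathfrak{c}\left(t\right)\right)$ isolated above. First observe that every entry of $A\left(\mathfrak{c}\left(t\right)\right)$ lies in $\mathbb{Q}\left(\xi\right)\left[t\right]$: the matrices in $\Lambda$ have entries in $\left\{\xi^{k}:k\in\mathbb{Z}_{n}\right\}\cup\left\{0\right\}$ and the coordinates of $\mathfrak{c}\left(t\right)$ are powers of $t$, so every order-$n$ minor lies in $\mathbb{Q}\left(\xi\right)\left[t\right]$. In Case $\left(a\right)$ the two computations already carried out show that the determinant equals a nonzero element of $\mathbb{Q}\left(\xi\right)$ times $\mathbf{q}\left(t\right)$; this is a non-vanishing polynomial of degree $\tfrac{n\left(n-1\right)}{2}\leq n\left(n-1\right)$, so both parts hold there.

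For Case $\left(b\right)$ I would expand $\det\mathbf{S}_{t}$ by Laplace's Expansion Theorem along the block $\mathbf{i}_{m}$ of the $m$ non-reversed rows,
\[
\det\mathbf{S}_{t}=\sum_{\mathbf{p}_{m}\in T\left(n,m\right)}\varepsilon\left(\mathbf{p}_{m}\right)\det\mathbf{S}_{t}\left(\mathbf{i}_{m},\mathbf{p}_{m}\right)\det\mathbf{S}_{t}\left(\mathbf{i}_{m},\mathbf{p}_{m}\right)^{c},\qquad\varepsilon\left(\mathbf{p}_{m}\right)\in\left\{\pm1\right\},
\]
and use the factorizations
\[
\mathbf{S}_{t}\left(\mathbf{i}_{m},\mathbf{p}_{m}\right)=\left[\xi^{p_{j}l_{k}}\right]\operatorname{diag}\left(t^{p_{1}},\dots,t^{p_{m}}\right),\qquad\mathbf{S}_{t}\left(\mathbf{i}_{m},\mathbf{p}_{m}\right)^{c}=\left[\xi^{c_{i}j_{k}}\right]\operatorname{diag}\left(t^{\iota\left(c_{1}\right)},\dots,t^{\iota\left(c_{s}\right)}\right),
\]
where $\mathbf{p}_{m}=\left\{p_{1}<\cdots<p_{m}\right\}$ and $\mathbb{Z}_{n}\setminus\mathbf{p}_{m}=\left\{c_{1}<\cdots<c_{s}\right\}$. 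Hence each Laplace summand is a single monomial $q_{\mathbf{p}_{m}}t^{\delta_{\mathbf{p}_{m}}}$ with $q_{\mathbf{p}_{m}}\in\mathbb{Q}\left(\xi\right)$ and exponent $\delta_{\mathbf{p}_{m}}=\sum_{p\in\mathbf{p}_{m}}p+\sum_{c\notin\mathbf{p}_{m}}\iota\left(c\right)$.

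To prove part $\left(1\right)$ I would track the coefficient of $t^{\kappa_{\left(n,m\right)}}$. By Lemma \ref{im} the summand for $\mathbf{p}_{m}=\mathbf{i}_{m}$ equals $Ct^{\kappa_{\left(n,m\right)}}$ with $C\neq0$ and Laplace sign $+1$; by the preceding lemma and its proof, for odd $n$ and any $\mathbf{p}_{m}\neq\mathbf{i}_{m}$ the monomial $q_{\mathbf{p}_{m}}t^{\delta_{\mathbf{p}_{m}}}$ either vanishes or has $\delta_{\mathbf{p}_{m}}=\kappa_{\left(n,m\right)}+\sigma$ with $\sigma\neq0$. Therefore the diagonal term $\mathbf{p}_{m}=\mathbf{i}_{m}$ is the unique summand of $t$-degree $\kappa_{\left(n,m\right)}$, so the coefficient of $t^{\kappa_{\left(n,m\right)}}$ in $\det\mathbf{S}_{t}$ is exactly $C\neq0$ and $\det\mathbf{S}_{t}$ is a non-vanishing element of $\mathbb{Q}\left(\xi\right)\left[t\right]$. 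For part $\left(2\right)$ I would bound each exponent by replacing $\sum_{p\in\mathbf{p}_{m}}p$ by the sum of the $m$ largest elements of $\left\{0,\dots,n-1\right\}$ and $\sum_{c\notin\mathbf{p}_{m}}\iota\left(c\right)$ by the sum of the $s=n-m$ largest values of the bijection $\iota$, obtaining
\[
\delta_{\mathbf{p}_{m}}\leq\left(mn-\tfrac{m\left(m+1\right)}{2}\right)+\left(sn-\tfrac{s\left(s+1\right)}{2}\right)=n^{2}-\tfrac{m^{2}+s^{2}+n}{2}\leq n\left(n-1\right),
\]
the last inequality holding because $m,s\geq1$ forces $m^{2}+s^{2}\geq m+s=n$. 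Hence every monomial appearing in $\det\mathbf{S}_{t}$, and so $\det\mathbf{S}_{t}$ itself, has degree at most $n\left(n-1\right)$.

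The substantive point — and the reason the two preceding lemmas are needed — is that when $n$ is not prime some minors $\det\left[\xi^{p_{j}l_{k}}\right]$ of the Fourier matrix can vanish, so one cannot conclude termwise that $\det\mathbf{S}_{t}\neq0$. The entire argument rests on the degree bookkeeping: for odd $n$ the diagonal Laplace term is the only summand of degree $\kappa_{\left(n,m\right)}$, so it cannot be cancelled no matter how the remaining (possibly vanishing) coefficients behave. Everything else is the routine monomial-degree accounting indicated above.
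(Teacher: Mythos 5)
Your proposal is correct and follows essentially the same route as the paper: Laplace expansion of $\det\mathbf{S}_{t}$ along the non-reversed rows $\mathbf{i}_{m}$, with Lemma \ref{im} giving the diagonal term $Ct^{\kappa_{(n,m)}}$, $C\neq0$, and the preceding odd-$n$ lemma guaranteeing every other summand is a monomial of a different degree, so the coefficient of $t^{\kappa_{(n,m)}}$ survives; the degree bound is the same monomial bookkeeping (your termwise estimate is in fact a slightly more careful version of the paper's bound via $\deg(\det A_{1}\det A_{4})\leq n(n-1)$). Your explicit treatment of Case $(a)$ matches the computations the paper carried out in its earlier subsection.
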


\begin{proof}
For the first part, let $S\left(  \mathfrak{c}\left(  t\right)  \right)  $ be
a submatrix of $A\left(  \mathfrak{c}\left(  t\right)  \right)  $ of order
$n$. Assume that $S\left(  \mathfrak{c}\left(  t\right)  \right)  $ is given
as shown in (\ref{sub}). Let
\[
T\left(  n,m\right)  =\left\{  s=\left\{  0\leq s_{1}<s_{2}<\cdots<s_{m}\leq
n-1\right\}  \subseteq\mathbb{Z}_{n}\right\}  .
\]
We fix $t\in T\left(  n,m\right)  .$ Put $\mathbf{S}_{t}=S\left(
\mathfrak{c}\left(  t\right)  \right)  .$ According to Laplace's Expansion
Theorem,
\begin{equation}
\det\left(  \mathbf{S}_{t}\right)  =\det\left(  \mathbf{S}_{t}\left(
t,s\right)  \right)  =\sum_{s\in T\left(  n,p\right)  }\left(  -1\right)
^{\left\vert s\right\vert +\left\vert t\right\vert }\det\left(  \mathbf{S}%
_{t}\left(  t,s\right)  \right)  \det\left(  \mathbf{S}_{t}\left(  t,s\right)
^{c}\right)  . \label{expansion}%
\end{equation}
Next,
\begin{align*}
\det\left(  \mathbf{S}_{t}\right)   &  =\det\left(  \mathbf{S}_{t}\left(
\mathbf{i}_{m},\mathbf{i}_{m}\right)  \right)  +\sum_{s\in T\left(
n,p\right)  ,s\neq\mathbf{i}_{m}}\left(  -1\right)  ^{\left\vert s\right\vert
+\left\vert t\right\vert }\det\left(  \mathbf{S}_{t}\left(  t,s\right)
\right)  \det\left(  \mathbf{S}_{t}\left(  t,s\right)  ^{c}\right) \\
&  =Ct^{\kappa_{\left(  n,m\right)  }}+h\left(  t\right)
\end{align*}
where
\[
Ct^{\kappa_{\left(  n,m\right)  }},h\left(  t\right)  =\sum_{s\in T\left(
n,p\right)  ,s\neq\mathbf{i}_{m}}\left(  -1\right)  ^{\left\vert s\right\vert
+\left\vert t\right\vert }\det\left(  \mathbf{S}_{t}\left(  t,s\right)
\right)  \det\left(  \left(  \mathbf{S}_{t}\left(  t,s\right)  \right)
^{c}\right)  \in\mathbb{Q}\left(  \xi\right)  \left[  t\right]  .
\]
From our previous discussion, the monomial $C\times t^{\frac{2m^{2}%
-2mn-2m+n^{2}+n}{2}}$ appears once in the expansion above. Moreover, since $C$
is never equal to zero, it follows that the determinant of $S\left(
\mathfrak{c}\left(  t\right)  \right)  $ is a non-vanishing polynomial in the
variable $t.$ For the second part, it is clear that the degree of the
polynomial $\det\left(  A_{1}\left(  t\right)  A_{4}\left(  t\right)  \right)
$ satisfies the following condition
\[
\deg\left(  \det A_{1}\left(  t\right)  \det A_{4}\left(  t\right)  \right)
\leq m\left(  n-1\right)  +\left(  n-m\right)  \left(  n-1\right)  =n\left(
n-1\right)  .
\]
Next, since the determinant of any sub-matrix of order $n$ of $A\left(
\mathfrak{c}\left(  t\right)  \right)  $ must be a sum of polynomials of
degree less or equal to $n\left(  n-1\right)  $ then the second part of the
lemma holds.
\end{proof}

\section{Proof of Theorem \ref{main}}

First, observe that $n\left(  n-1\right)  +1=n^{2}-n+1$. Fix $\lambda
\in\digamma_{n}$ where $\digamma_{n}$ is as described in (\ref{Fn}). Since the
determinant of every submatrix of $A\left(  \mathfrak{c}\left(  t\right)
\right)  $ of order $n$ is a polynomial in $\mathbb{Q}\left(  \xi\right)
\left[  t\right]  $ of degree bounded above by $n\left(  n-1\right)  ,$ it is
clear that the determinant of every submatrix of $A\left(  \mathfrak{c}\left(
\lambda\right)  \right)  $ of order $n$ cannot be equal to zero. As such, it
immediately follows that $\Lambda\left[  1,\lambda,\cdots,\lambda
^{n-1}\right]  ^{T}$ is a set of cardinality $2n$ in $\mathbb{C}^{n}$ which is
a full spark frame. Consequently, since $\Gamma=\mathbf{F}^{-1}\Lambda
\mathbf{F}\Leftrightarrow\Gamma\mathbf{F}^{-1}=\mathbf{F}^{-1}\Lambda$ it
follows that
\[
\mathbf{F}^{-1}\left(  \Lambda\left[  1,\lambda,\cdots,\lambda^{n-1}\right]
^{T}\right)  =\Gamma\left(  \mathbf{F}^{-1}\left[  1,\lambda,\cdots
,\lambda^{n-1}\right]  ^{T}\right)
\]
is a set of cardinality $2n$ in $\mathbb{C}^{n}$ enjoying the Haar property.


\begin{thebibliography}{99}                                                                                               %


\bibitem {Aitken}A. C, Aitken Determinants and Matrices, Oliver and Boyd,
Ltd., Edinburgh (1939).

\bibitem {Full spark}B. Alexeev, J. Cahill, D. G. Mixon, Full spark frames,
preprint (2011)

\bibitem {Casazza}P. Casazza, J. Kovacevi, Equal-Norm Tight Frames with
Erasures, Adv. Comput. Math. 18(2-4), 387--430 (2003).

\bibitem {Evans}R.J. Evans, I.M. Isaacs, Generalized Vandermonde Determinants
and Roots of Unity of Prime Order, Proc. Amer. Math. Soc. 58, 51--54 (1976).

\bibitem {Howard}E. Howard Elementary matrix theory, Reprint of the 1966
edition. Dover Books on Advanced Mathematics. Dover Publications, Inc., New
York, 1980.

\bibitem {Romanos}R. D. Malikiosis, A note on Gabor frames in finite
dimensions, Appl. Comput. Harmon. Anal. 38(2), 318330 (2015).

\bibitem {Oussa}V. Oussa, Dihedral Group Frames which are Maximally Robust to
Erasures, Linear and Multilinear Algebra, Volume 63, Issue 12, 2015

\bibitem {Pfander}J. Lawrence, G. Pfander, D. Walnut, Linear Independence of
Gabor Systems in Finite Dimensional Vector Spaces,\ J. Fourier Anal. Appl.,
11(6), 715--726 (2005).

\bibitem {Pfander 1}G. Pfander, Gabor Frames in Finite Dimensions,\ Finite
frames (Chapter VI), 193--239, Appl. Numer. Harmon. Anal., Birkh\"{a}user /
Springer, New York (2013).

\bibitem {Terras}A. Terras, Fourier Analysis on Finite Groups and
Applications, London Math. Soc. Stud. Texts, vol. 43 Cambridge University
Press, Cambridge (1999).


\end{thebibliography}
\end{document}